\documentclass{amsart}

\usepackage[T1]{fontenc}
\usepackage{lmodern}
\usepackage{microtype}

\usepackage{amsthm,amsfonts,amssymb,amsmath,thmtools}
\usepackage{bm}
\usepackage{calligra}
\usepackage{mathrsfs}
\usepackage{mathtools}
\usepackage{stmaryrd}
\usepackage{tikz}
\usepackage{tikz-3dplot}
\usepackage{tikz-cd}
\usepackage[all]{xy}

\usepackage{bookmark}
\usepackage{enumitem}
\usepackage[margin=1.5in]{geometry}
\usepackage{indentfirst}

\usepackage{color}
\definecolor{dkgreen}{rgb}{0,0.4,0}
\definecolor{mblue}{rgb}{0,0.5,0.62}
\definecolor{gray}{rgb}{0.5,0.5,0.5}
\definecolor{burgendy}{rgb}{0.502, 0, 0.125}

\usepackage{hyperref}
\hypersetup{
    linktoc=page,
	colorlinks = true,
	linkcolor = mblue, 
	filecolor = cyan,
	urlcolor = dkgreen, 
	citecolor = burgendy, 
}

\usepackage[capitalize]{cleveref}

\theoremstyle{plain}
\newtheorem{thm}{Theorem}[section] 
\newtheorem{cor}[thm]{Corollary}
\newtheorem{lem}[thm]{Lemma}  
\newtheorem{prop}[thm]{Proposition}

\theoremstyle{definition}
\newtheorem{defn}[thm]{Definition}
\newtheorem{rmk}[thm]{Remark}
\newtheorem{exe}[thm]{Example}

\makeatletter
    \let\c@equation\c@thm
\makeatother
\numberwithin{equation}{section}

\creflabelformat{secnumber}{Section~#2#1#3}

\newcommand{\bb}[1]{\mathbb{#1}}
\newcommand{\rr}[1]{\mathrm{#1}}
\newcommand{\cc}[1]{\mathcal{#1}}
\newcommand{\scr}[1]{\mathscr{#1}}

\newcommand{\Hom}{\mathrm{Hom}}

\newcommand{\inv}{^{-1}}
\newcommand{\pr}{\mathrm{pr}}

\newcommand{\KEXP}{\mathrm{KExpVar}_k}

\let\emptyset\varnothing

\title[Invariance of irregular Hodge numbers]{On the invariance of irregular Hodge numbers under crepant birational equivalences}

\author{Yichen Qin} 
\email{yichen.qin@fudan.edu.cn}
\address{School of Mathematical Sciences, Fudan University, Handan Road 220, 200437, Shanghai, China}

\subjclass{14C30, 	14F40,14J33}
\begin{document}

\begin{abstract}
The Batyrev--Kontsevich theorem asserts that birational Calabi--Yau varieties have the same Hodge numbers. In this article, we prove an analogue for Landau--Ginzburg models $(U,f)$, where $U$ is a smooth quasi-projective complex variety and $f$ is a regular function on $U$. Under a natural non-degeneracy assumption, we show that the classes of such models in the localized Grothendieck ring of complex algebraic varieties with exponentials are invariant under crepant birational equivalences. Consequently, the irregular Hodge numbers of the twisted de Rham cohomology $\rr{H}^k_{\rr{dR}}(U,f)$ are invariant as well.
\end{abstract}
\maketitle

\section{Introduction} 
Mirror symmetry predicts that a Calabi--Yau variety $X$ is associated with a mirror Calabi--Yau variety $X^\vee$, whose Hodge numbers satisfy
$$
h^{p,q}(X)=h^{\dim X-p,q}(X^\vee).
$$
In many mirror constructions, different choices may produce birational models of the mirror. This raises the question: do birational Calabi--Yau varieties have the same Hodge diamond? Batyrev used $p$-adic integration to prove that the Betti numbers of birational Calabi--Yau varieties are equal \cite{BatyrevBirationalCalabi99}. Kontsevich sketched in his Orsay lecture \cite{KontsevichLectureOrsay95} a proof using motivic integration. Later, the foundational work of motivic integration was completed by Denef and Loeser \cite{DenefLoeserGermsarcs99}, thereby proving the invariance of Hodge numbers.  
Moreover, Ito \cite{ito_bir-cy} and Wang \cite{wang_bir-cy} independently proved the same invariance by refining Batyrev's method with $p$-adic Hodge theory.

In recent years, mirror symmetry for non-Calabi--Yau varieties, and in particular for Fano varieties, has attracted considerable attention.  The mirror of a Fano variety is expected to be a Landau--Ginzburg model, namely a pair $(U,f)$ consisting of a smooth non-proper variety $U$ and a regular function $f\colon U\to\mathbb A^1_{\mathbb C}$. An important class of examples is provided by the Givental--Hori--Vafa mirrors of Fano complete intersections in smooth toric Fano varieties \cite{GiventalMirrorToric98,HoriVafa00}. In many cases, these mirrors admit Laurent polynomial presentations involving auxiliary choices, such as nef partitions and torus charts. The construction of such presentations and the birational relations among the resulting models, including mutations, have been studied in \cite{PrzyjalkowskiHoriVafa11,DoranHarderToricDegenerations16,AkhtarCoatesGalkinKasprzykMinkowski12,PrinceBirationality21}. See \cref{sec:Laurent-mutations-and-toric-Landau-Ginzburg-mirrors} for more discussion. Motivated by these results, it is natural to find an analogue of the Batyrev--Kontsevich theorem for Landau--Ginzburg models.

\subsection{Main result}\label{sec:main-result}
To formulate such an invariance statement, one first needs a suitable analogue of Hodge numbers for Landau--Ginzburg models. Katzarkov, Kontsevich and Pantev \cite{KatzarkovEtAlBogomolovTian17} proposed three candidates and conjectured that they coincide. See, for example, \cite{LUNTS2018189,Shamoto2018,sabbah2018properties,HarderHodgenumbers21} for discussions of their relations. Moreover, by the work of \cite{EsnaultEtAl$E_1$degenerationirregular17}, one of the candidates coincides with the irregular Hodge numbers. In this paper, we focus on irregular Hodge numbers. 

Irregular Hodge theory was initiated by Deligne \cite{DeligneTheorieHodge07} and developed further by many authors such as Esnault, Kontsevich, Mochizuki, Sabbah, and Yu; see, for example, \cite{SabbahFourierLaplace10,YuIrregularHodge14,EsnaultEtAl$E_1$degenerationirregular17,SabbahIrregularHodge18,MochizukiRescalability25}. Given a Landau--Ginzburg model $(U,f)$, we can define its \textit{twisted de Rham cohomologies} as the hypercohomologies of the twisted de Rham complex
    \begin{equation*} 
    \cc{O}_U\xrightarrow{~\rr{d}+\rr{d}f~}\Omega_U^1\xrightarrow{~\rr{d}+\rr{d}f~}\dots \xrightarrow{~\rr{d}+\rr{d}f~}\Omega_U^{\dim U},
    \end{equation*}
denoted by $\rr{H}^k_{\rr{dR}}(U,f)$. The \textit{irregular Hodge filtration} is a decreasing filtration on these cohomologies with desired Hodge-theoretic properties and the dimensions of the graded quotients are the \textit{irregular Hodge numbers}.

Instead of working with arbitrary Landau--Ginzburg models, we consider pairs with non-degenerate functions, which play the role of smooth projective varieties in this context; cf. \cref{defn:non-degenerate}. A \textit{pair with a non-degenerate function} is a triple $(X,D,f)$ consisting of a smooth projective variety $X$, a divisor $D$ with simple normal crossing support, and a regular function $f\colon X\setminus |D|\to \bb{A}^1$ satisfying a certain non-degeneracy condition in the sense of Katz \cite{KatzExponentialsums90} and Mochizuki \cite{MochizukiMixedTwistor15}. This class includes non-degenerate Laurent polynomials and, after translating the function by a regular value, the proper compactified Landau--Ginzburg models used in several verifications of the Katzarkov--Kontsevich--Pantev conjecture for Fano varieties; see \cref{exe:Laurent-Polynomial} and Remark~\ref{rmk:ghv-log-CY-compactifications}. In particular, the exponential mixed Hodge structure associated to $(X\setminus|D_X|,f)$ is pure; cf. \cref{prop:purity-EMHS}.

The birational relation we consider for pairs with non-degenerate functions is crepant equivalence. More precisely, we call two pairs with non-degenerate functions $(X,D_X,f)$ and $(Y,D_Y,g)$ \textit{crepant} if there exist birational morphisms $\varphi\colon Z\to X$ and $\psi\colon Z\to Y$ from a common smooth projective variety $Z$ such that
	$$\varphi^*(K_X+D_X)= \psi^*(K_Y+D_Y),$$
and the two functions $f\circ \varphi$ and $g\circ \psi$ agree on a Zariski dense open subset of $Z$. This notion is closely related to the usual $K$-equivalence. When the functions extend to morphisms to $\bb{P}^1$, the crepant condition above reduces to the usual $K$-equivalence;   see \cref{defn:crepant} and \cref{rmk:proper-crepant-k-equivalence}.

Our main result is the following.

\begin{thm}\label{intro::main}
   Let $(X,D_X,f)$ and $(Y,D_Y,g)$ be crepant pairs with non-degenerate functions. Then the Landau--Ginzburg models $(X\setminus |D_X|,f)$ and $(Y\setminus |D_Y|,g)$ have the same irregular Hodge numbers. 
\end{thm}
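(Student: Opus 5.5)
The plan is to imitate the Kontsevich–Denef–Loeser proof of the Batyrev–Kontsevich theorem. We work in the Grothendieck ring of varieties with exponentials $\KEXP$, suitably localized at $\bb{L}$ and completed if necessary. First we show that the class $[X\setminus|D_X|,f]$ in this localized ring is a crepant invariant. Then we push this class through a motivic realization that recovers the irregular Hodge numbers.

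\textbf{Step 1: motivic integral over $Z$.} Fix a common resolution $\varphi\colon Z\to X$, $\psi\colon Z\to Y$. After further blowing up, we may assume that $E_Z=\varphi^{-1}(|D_X|)\cup\psi^{-1}(|D_Y|)$ together with the exceptional loci is simple normal crossing. We may also assume that $f\circ\varphi=g\circ\psi=:h$ is a regular function on the complement of some simple normal crossing divisor, with a non-degenerate extension. We then define a log-motivic integral with exponential weight on the arc space of $Z$:
\[
\int_{\mathcal{L}(Z)} \bb{L}^{-\rr{ord}(K_{Z}-\varphi^*(K_X+D_X))}\,[\,\cdot\,,h]\,d\mu .
\]
This is a motivic integral in the exponential setting, as in Cluckers–Loeser and Hrushovski–Kazhdan. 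It is defined using arcs whose generic point lies in $X\setminus|D_X|$, with the log order along $D_X$ recorded.

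Applying the change of variables formula along $\varphi$ identifies this integral with the corresponding integral on $X$, and the crepant hypothesis identifies the integrands for $X$ and $Y$. So it suffices to compute the integral on a single pair $(X,D_X,f)$.

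\textbf{Step 2: explicit formula on a log resolution.} Stratify $X$ by the strata $D_I^\circ$ of $D_X$ and compute the integral stratum by stratum. The contribution of each stratum is a sum over multiplicity vectors of classes of the form $[D_I^\circ\times\text{torus bundle}, f]\cdot\bb{L}^{-\dots}$. Non-degeneracy is the key input here: near a pole component of $f$, the function is locally $u/x^m$ with the relevant fibrations trivial. As a result, the exponential classes supported over $D_X$ cancel or vanish, just as $[\bb{A}^1,t]=0$ in $\KEXP$. The integral then reduces to $[X\setminus|D_X|,f]$ times a universal factor. The factor is universal because crepancy forces log discrepancy $1$ along the components of $D$. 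This gives $[X\setminus|D_X|,f]=[Y\setminus|D_Y|,g]$ in the localized (completed) ring.

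\textbf{Step 3: realization.} Use the exponential mixed Hodge structure realization (Kontsevich–Soibelman, Fresán–Jossen). This yields a ring map from the localized $\KEXP$ to a Grothendieck ring of exponential mixed Hodge structures that sends $\bb{L}$ to an invertible element, and it extends to the completion via the weight filtration. The image records the irregular Hodge–Euler polynomial $\sum(-1)^k h^{p,q}$ of $\rr{H}^k_{\rr{dR}}(U,f)$; this uses Sabbah–Yu and the comparison of the irregular Hodge filtration with the EMHS. By \cref{prop:purity-EMHS}, $\rr{H}^k$ is pure of weight $k$. Hence the Hodge–Euler polynomial determines each $h^{p,q}$, since the weights separate the cohomological degrees.

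\textbf{Main obstacle.} The main obstacle is Step 2: the vanishing of the contributions of boundary strata in the exponential setting. This requires a precise use of the non-degeneracy condition in the sense of Katz and Mochizuki. Concretely, we need local normal forms $f=u\,x^{-m}$ with $u$ a unit whose restriction to the poles defines smooth, transversal level sets. I would first try to prove a "fibration lemma" in $\KEXP$: the class of a punctured-tube bundle with the function $u x^{-m}$ equals that of the same bundle with $f$ replaced by $0$ times zero, by an explicit $\bb{G}_m$-action argument.
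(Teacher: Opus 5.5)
Your proposal has a genuine gap in Step 1, and that gap is where the real difficulty of the theorem lies. Pushed down to $X$, your integrand is $\bb{L}^{\mathrm{ord}\,D_X}[\,\cdot\,,f]$. Every coefficient $n_i$ of $D_X$ is at least $1$, because $D_X$ is the pole divisor of $f$. Arcs based on $D_1^\circ$ with contact order $m$ have measure of order $\bb{L}^{-m}$ but weight $\bb{L}^{n_1m}$, so the integral is not absolutely convergent.

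Any finiteness would have to come from exponential cancellation on arcs along which $f$ has a pole. You never say how $[\,\cdot\,,h]$ is evaluated on such arcs. The change of variables formula you invoke (Cluckers--Loeser, as quoted in the paper) is a statement about integrable functions. It gives no licence to regroup such cancelling contributions by the strata of $Z$ rather than those of $X$ or $Y$, and that regrouping is the whole point. In general $\varphi^{-1}(X\setminus|D_X|)\neq\psi^{-1}(Y\setminus|D_Y|)$: arcs based on the difference (for instance on $E^\circ$ in \cref{lem:crepant-blow-up}) are interior for one pair and boundary for the other.

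The paper never integrates over boundary arcs. In \cref{prop:crepant-mor} it integrates only residual functions based on the open part, where the weight $\bb{L}^{-\mathrm{ord}\,K_{Z/X}}$ has $K_{Z/X}$ effective. It first blows up the loci $Z(f)\cap D_i$ on both sides until the functions extend to $\bb{P}^1$, which forces $\varphi^{-1}(U)=\psi^{-1}(V)$ on a common resolution. Each such blow-up either preserves the open part or adds a piece $E^\circ\cong Z^\circ\times\bb{A}^1$ on which $\tilde f$ is the fibre coordinate, so the class is unchanged.

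Your Step 2 also puts the cancellation in the wrong place. Near $Z(f)\cap D_X$ the local form is $f=y\,u/\prod x_i^{n_i}$, not $u/x^m$. The arcs there with $\mathrm{ord}_t y\ge N:=\sum n_i\,\mathrm{ord}_t x_i$ have no pole at all. Their contribution vanishes because the $t^N$-coefficient of $y$ is a free $\bb{A}^1$-coordinate on which $f(\gamma)(0)$ is linear with nonzero slope; this is the arc-space counterpart of $[E^\circ,\tilde f]=0$. A $\bb{G}_m$-rescaling cannot produce this, since $[\bb{G}_m,c\lambda^{-m}]$ is not zero in general (for $m=1$ it equals $-1$). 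Crepancy also does not force log discrepancy $1$ along $D$: the log discrepancy of $D_i$ for $(X,D_X)$ is $1-n_i\le 0$.

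Your route can be repaired:
\begin{enumerate}
\item Integrate only over arcs with $\mathrm{ord}_t f(\gamma)\ge 0$, against the residual function $f(\gamma)(0)$. This set is intrinsic, since it corresponds to $\mathrm{ord}_t h(\tilde\gamma)\ge 0$ on $Z$.
\item By the local form above, $\bb{L}^{\mathrm{ord}\,D_X}$ is absolutely integrable on it, each stratum contributing about $\bb{L}^{-\sum m_i}$.
\item The usual change of variables then applies, and the boundary pieces vanish by the free-coefficient argument.
\end{enumerate}
As written, however, this convergence and change-of-variables step is missing.
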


We give several geometric consequences of \cref{intro::main} in \cref{sec:Laurent-mutations-and-toric-Landau-Ginzburg-mirrors}. First, the irregular Hodge numbers of proper compactified Landau--Ginzburg models are invariant under flops over $\bb{P}^1$. We then prove a corresponding invariance statement for compatible birational log Calabi--Yau models. Finally, in the toric setting, we apply the result to mutation-equivalent Laurent polynomials whenever they admit suitable non-degenerate log Calabi--Yau compactifications. These compactifications occur in many Givental--Hori--Vafa mirrors, although their existence is an additional geometric assumption.

\begin{rmk}
	For a fixed pair $(X,D_X)$, Zhang and the author showed in \cite[Thm.~1.3.1]{QIn26} that the irregular Hodge numbers of $(X\setminus|D_X|,f)$ are independent of the choice of non-degenerate function $f$. Nevertheless, when comparing two different pairs, the compatibility of the functions required in \cref{defn:crepant} is essential for \cref{intro::main}; crepancy of the underlying pairs alone is not sufficient. See \cref{rmk:not-extendable}.
\end{rmk}

\subsection{Outline of the proof}
Let $\mathcal{E}xp\mathcal{M}_{\bb{C}}$ denote the localized Grothendieck ring of complex algebraic varieties with exponentials. A complex algebraic variety $V$ equipped with a regular function $h\colon V\to\bb{A}^1$ determines a class $[V,h]_{\bb{C}}$ in this ring; see \cite[\S1.1]{Chambert-LoirLoeserMotivicheight16} and \cref{sec:motivic-integrations}.
The key input is the following stronger motivic statement.
\begin{thm}[{\ref{thm::main}}]\label{intro::motivic}
If $(X,D_X,f)$ and $(Y,D_Y,g)$ are crepant pairs with non-degenerate functions, then
\begin{equation}\label{eq:intro-motivic-equality}
[X\setminus|D_X|,f]_{\bb{C}}
=
[Y\setminus|D_Y|,g]_{\bb{C}}
\end{equation}
in $\mathcal{E}xp\mathcal{M}_{\bb{C}}$.
\end{thm}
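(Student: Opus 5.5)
We will prove \cref{intro::motivic} by motivic integration with exponentials, following the Kontsevich--Denef--Loeser strategy for $K$-equivalent varieties. We replace the ordinary motivic measure on arc spaces by the exponential motivic integral of Cluckers--Loeser and Chambert-Loir--Loeser, with values in (a completion of) $\mathcal{E}xp\mathcal{M}_{\bb{C}}$.

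\emph{Step 1: a motivic integral on $Z$.} Fix the common resolution $\varphi\colon Z\to X$, $\psi\colon Z\to Y$. After further blowing up, we may assume that $D_Z:=\varphi^{-1}(D_X)\cup\psi^{-1}(D_Y)$ together with the exceptional loci is simple normal crossing. On $X$ we consider the log arc space: arcs $\gamma$ with $\mathrm{ord}_{D_X}$-data recorded. We form the integral
\[
I_X=\int_{\cc{L}(X)} \mathbb{L}^{-\mathrm{ord}(K_X+D_X)}\,\mathbf{e}(f\circ\gamma)\,d\mu,
\]
or more precisely a version with log poles, whose integrand pairs the Jacobian/log discrepancy weight with the exponential of the ``constant term'' of $f$ along arcs. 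We expect $I_X$, suitably normalized, to equal $[X\setminus|D_X|,f]_{\bb{C}}$. The change of variables formula along $\varphi$ turns the weight into $\mathbb{L}^{-\mathrm{ord}\,\varphi^*(K_X+D_X)}$ on $Z$. The exponential factor is pulled back because $f\circ\varphi=g\circ\psi$ as rational functions on $Z$. Crepancy $\varphi^*(K_X+D_X)=\psi^*(K_Y+D_Y)$ then gives $I_X=I_Y$ formally.

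\emph{Step 2: computing $I_X$ and the role of non-degeneracy.} This is the main obstacle. The function $f$ has poles along components of $D_X$, and arcs hitting the pole locus contribute terms of the form $[\,\cdot\,,\text{leading coefficient of }f]$. We will stratify by the contact orders $\mathrm{ord}_{D_i}$ and compute each stratum explicitly in SNC coordinates. There $f=u/\prod x_i^{m_i}$ with $u$ a unit near the pole divisor, and non-degeneracy is precisely what ensures this local form together with smoothness of $u$ restricted to strata. Along strata where $f$ genuinely has a pole ($m\cdot\mathrm{ord}>0$), the fiberwise exponential sum is over an affine line on which the leading term is a nonconstant linear function. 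Its class $[\bb{A}^1,t]=0$ kills the contribution. On strata with zero pole order, the contribution is a toric-type geometric series in $\mathbb{L}^{-1}$ times $[D_I^\circ,f|]$-type classes. The log canonical weighting $K_X+D_X$ makes these sums telescope, so that only the open stratum survives and $I_X=[X\setminus|D_X|,f]_{\bb{C}}\,\mathbb{L}^{-\dim X}$. The delicate part is the mixed strata: components of $D_X$ that are not poles of $f$, and points where zeros and poles of $f$ meet. There we plan to use the non-degeneracy condition of \cref{defn:non-degenerate} to reduce to a product situation and apply the vanishing $[\bb{G}_m\times V, t\cdot h]$-type identities.

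\emph{Step 3: conclusion.} Combining Steps 1 and 2 gives the equality in the completed ring. Convergence and the comparison with the uncompleted localized ring need care, since the completion map may not be injective. We therefore either work with the explicit finite formula from Step 2 computed on $Z$, or note that the identity from Step 1 can be proved stratum-by-stratum at finite level using the Denef--Loeser change of variables for cylinders. Either route yields \eqref{eq:intro-motivic-equality}. \cref{intro::main} then follows by applying the irregular Hodge realization on $\mathcal{E}xp\mathcal{M}_{\bb{C}}$. By \cref{prop:purity-EMHS} the relevant cohomology is pure, so the Euler characteristics determine the individual irregular Hodge numbers.
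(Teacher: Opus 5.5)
\textbf{Verdict.} Your route differs from the paper's, and its central step, Step 1, is only formal. The paper never integrates an exponential over arcs that meet the poles. It first makes $f$ extend to $\bb{P}^1$ by crepant blow-ups along $Z(f)\cap D_i$ (\cref{lem:crepant-blow-up}, \cref{prop:crepant-blow-up}). Each blow-up preserves the class, because the only new open piece is $E^\circ\cong Z^\circ\times\bb{A}^1$ with potential equal to the fibre coordinate. After that, $\varphi^{-1}(U)=\psi^{-1}(V)=W$, and change of variables is applied to the residual function $[U,f]_X$, which is supported on $\scr{L}(U)$. Crepancy enters only as $\mathrm{ord}_{K_{Z/X}}=\mathrm{ord}_{K_{Z/Y}}$ on $\scr{L}(W)$. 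You instead want a single log-weighted oscillatory integral over all of $\scr{L}(X)$.

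\textbf{Why Step 1 fails as written.} The quantity $\mathrm{ord}\,K_X$ has no meaning on $\scr{L}(X)$. The only reading that makes your weight meaningful and crepant-compatible is the Gorenstein measure of a local generator of $\omega_X(D_X)$. That weight is $\bb{L}^{\mathrm{ord}_{D_X}}$, and it does become $\bb{L}^{\mathrm{ord}(\varphi^*D_X-K_{Z/X})}$ on $Z$.
\begin{itemize}
\item Since every multiplicity $m_i\ge 1$, this weight is not integrable. Arcs through $D_1\setminus Z(f)$ with $\mathrm{ord}_{D_1}=k$ have weighted volume of dimension $(m_1-1)k+O(1)$, which does not tend to $-\infty$.
\item Your $I_X$ therefore exists only as an iterated sum on $X$. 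One must first integrate out one specific jet coordinate in which the residue of $f\circ\gamma$ is affine with unit slope: $x_{1,k_1+N}$, or $y_N$ near $Z(f)$.
\item Nothing in the proposal shows the integrand is integrable in the Cluckers--Loeser sense, and their change of variables requires this.
\item Pulled back to $Z$ and cut by $\mathrm{ordjac}\,\varphi$ and by the $Y$-side strata, the pieces have no dimension control. Their exponential classes need not vanish separately, so $I_X=I_Y$ does not follow by rearranging sums. This is exactly where the paper's difficulty reappears: $\varphi^{-1}(U)\neq\psi^{-1}(V)$, and zeros and poles of $F$ meet on $Z$.
\item Your fallback of computing directly on $Z$ is also unavailable. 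In general $(Z,\varphi^*D_X-K_{Z/X},F)$ is neither effective nor non-degenerate, so Step 2 does not apply there.
\end{itemize}

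\textbf{How your idea could be repaired.} Integrate $\bb{L}^{\mathrm{ord}_{D_X}}\mathbf{e}(\overline{f\circ\gamma})$ only over $A_X=\{\gamma:\mathrm{ord}(f\circ\gamma)\ge 0\}$.
\begin{itemize}
\item This set is defined by the rational function, so $\varphi^{-1}A_X=\psi^{-1}A_Y$ up to measure zero.
\item The integral is absolutely convergent. Near $B=D_I^\circ\cap Z(f)$ write $f=y/\prod_{i\in I}x_i^{m_i}$. Arcs in $A_X$ with $\mathrm{ord}\,x_i=k_i\ge 1$ must have $\ell=\mathrm{ord}\,y\ge N=\sum m_ik_i$, and their weighted dimension is $-\sum k_i-(\ell-N)+O(1)$. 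Arcs through $D\setminus Z(f)$ are not in $A_X$ at all.
\item Change of variables then legitimately gives $\int_{A_X}=\int_{A_Y}$.
\item The boundary term vanishes. In the stratum $\ell=N$ the residue is $y_N/\prod x_{i,k_i}^{m_i}$, so the leading coefficient of $y$ contributes $[\bb{G}_m,t]=-1$ instead of $\bb{L}-1$. The strata with $\ell>N$ have residue $0$. Since $\bb{L}^N\sum_{\ell>N}\bb{L}^{-\ell}=(\bb{L}-1)^{-1}$, they contribute exactly the opposite.
\item Components of $D_X$ that are not poles never occur: transversality forbids $s_0$ from vanishing identically on any $D_i$.
\end{itemize}
This yields $\int_{A_X}=\bb{L}^{-\dim X}[U,f]$, a blow-up-free alternative to the paper's reduction. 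That argument, however, is not in your proposal.
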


Applying the motivic measure defined by the irregular Hodge polynomial to \eqref{eq:intro-motivic-equality} gives equality of the corresponding irregular Hodge polynomials.
Purity of the exponential mixed Hodge structures associated with non-degenerate functions then allows one to recover the individual irregular Hodge numbers; see \cref{cor::same-Hodge-numbers}.
Thus, \cref{intro::motivic} implies \cref{intro::main}.
We now outline the proof of \cref{intro::motivic} in two steps.
\medskip

\noindent\textit{Step 1.}
Assume that $f$ and $g$ extend to morphisms from $X$ and $Y$ to $\mathbb{P}^1$. In this case, following Kontsevich's motivic integration strategy \cite{KontsevichLectureOrsay95}, we apply the change of variables formula for constructible exponential functions \cite[Thm.~4.2.1]{CluckersLoeserConstructibleexponential10} to obtain \eqref{eq:intro-motivic-equality}; see \cref{prop:crepant-mor}.

\medskip

\noindent\textit{Step 2.}
In general, the pole divisor of a non-degenerate function may meet its zero locus transversally. In this case, the motivic change-of-variables formula on a common resolution only compares the common open part and gives an identity of the form
$$
[\varphi(W),f]_{\bb C}=[\psi(W),g]_{\bb C},
$$
where $W=\varphi^{-1}(X\setminus|D_X|) \cap\psi^{-1}(Y\setminus|D_Y|).$ It does not directly compare the complements of this common open subset.

To overcome this difficulty, starting from a non-degenerate function $f_0$ on $(X_0,D_0)$, we construct a sequence of crepant blow-ups
$$
\pi_i\colon
(X_i,D_i,f_i)\longrightarrow
(X_{i-1},D_{i-1},f_{i-1}),
\qquad i=1,\ldots,n,
$$
along the intersections of the zero locus with components of the pole divisor. After finitely many steps $Z(f_n)$ is disjoint from $|D_n|$, and hence $f_n$ extends to a morphism $X_n\to\mathbb P^1$.

The essential point is that these crepant blow-ups need not preserve the open Landau--Ginzburg space. By \cref{prop:crepant-minus-trash}, we have
$$
[\pi_i^{-1}(X_{i-1}\setminus|D_{i-1}|),f_i]_{\bb C}
=
[X_{i-1}\setminus|D_{i-1}|,f_{i-1}]_{\bb C}.
$$
By \cref{lem:crepant-blow-up}, the open set $X_i\setminus|D_i|$ either equals $\pi_i^{-1}(X_{i-1}\setminus|D_{i-1}|)$ or contains in addition an exceptional piece $E_i^\circ\simeq\mathbb A^1_t\times Z_i$, on which the function is of the form $f_i|_{E_i^\circ}=t$. Since the exponential class $[\mathbb A^1_t\times Z_i,t]_{\bb C}$ vanishes, every step preserves the exponential motivic class:
$$
[X_{i-1}\setminus|D_{i-1}|,f_{i-1}]_{\bb C}
=
[X_i\setminus|D_i|,f_i]_{\bb C}.
$$
Applying this construction to both sides produces crepant modifications which remain crepant to each other. Step~1 therefore applies and proves \cref{intro::motivic}.

\subsection{Organization}
This article is arranged as follows. In \cref{sec:crepant}, we recall the definitions of simple normal crossing pairs, non-degenerate functions, and crepant equivalences between them, and give some basic properties and examples. In \cref{sec:motivic-integrations}, we recall the necessary results of motivic integration for exponential Grothendieck groups following \cite{Chambert-LoirLoeserMotivicheight16,BiluMotivicEuler23}, and show that the irregular Hodge polynomial is a motivic measure. In \cref{sec:proof}, we prove that two crepant pairs with non-degenerate functions define the same class in $\mathcal{E}xp\mathcal{M}_{\bb{C}}$, hence proving \cref{intro::main}. Finally, in \cref{sec:Laurent-mutations-and-toric-Landau-Ginzburg-mirrors}, we give geometric applications to flops, log Calabi--Yau models, and mutations of toric Landau--Ginzburg models.

\section{Crepant equivalences}\label{sec:crepant}
In this section, we recall simple normal crossing pairs, non-degenerate functions on them, and crepant equivalences between them.

\subsection{Simple normal crossing pairs and crepant morphisms}
Let $X$ be a smooth projective variety over $\bb{C}$ and $D$ an effective divisor on $X$. The pair $(X, D)$ is called a \textit{simple normal crossing pair} if $D$ has simple normal crossing support. 

Two pairs $(X,D_X)$ and $(Y,D_Y)$ are called \textit{crepant} if there exist birational morphisms $\varphi\colon Z\to X$ and $\psi\colon Z\to Y$ from a common smooth projective variety $Z$ such that
$$
\varphi^*(K_X+D_X)=\psi^*(K_Y+D_Y).
$$
Let $\pi\colon X\to Y$ be a birational morphism. We say that $\pi$ defines a \textit{crepant morphism of pairs} $\pi\colon (X,D_X)\to (Y,D_Y)$ if
$$
\pi^*(K_Y+D_Y)= K_X+D_X.
$$
Equivalently, this is the special case of the above definition with $Z=X$ and one of the two maps the identity.
 
\begin{exe}\label{exe:toric}
For a Laurent polynomial $f$ on a torus, any smooth toric compactification $X_f$ with toric boundary $D_f$ is log-Calabi--Yau, i.e.,
$$
	K_{X_f}+D_f= 0.
$$
For any two such smooth toric compactifications $X_f$ and $X_g$ of Laurent polynomials on the same torus, there exists a smooth projective variety $Z$ with birational morphisms $Z\to X_f$ and $Z\to X_g$. Since both pairs are log-Calabi--Yau, these morphisms give a crepant equivalence between $(X_f,D_f)$ and $(X_g,D_g)$.
\end{exe}

\begin{exe}\label{exe:crepant}
	Let $X=\bb{P}^2$ with coordinates $x_0,x_1,x_2$, and $D=L_0\cup L_1\cup L_2$, where the three lines $L_0, L_1$ and $L_2$ are defined by $x_0,x_1$, and $x_2$ respectively.  Take a point $p$ in $L_2\setminus (L_0\cup L_1)$. Let $\tilde{X}=\rr{Bl}_{p}\bb{P}^2$ be the blow-up of $X$ along $p$, $\tilde{L}_2$   the strict transform of $L_2$, and $\tilde{D}$ the divisor $L_0\cup L_1\cup \tilde{L}_2$.  Then the blow-up morphism $\pi\colon (\tilde{X},\tilde{D})\to (X,D)$ is crepant.
\end{exe}

\begin{exe}\label{exe:crepant-2}
	Let $X=\bb{P}^3$ with coordinate $[x_0:x_1:x_2:x_3]$. Let $D=L_0\cup L_1$ where two lines $L_0$ and $L_1$ are defined by $x_0$ and $x_1$ respectively. Take $p=[0:0:1:1]\in L_0\cap L_1$. We write $\tilde{X}$ as the blow-up of $X$ at $p$, and let $\tilde{D}=\tilde{L}_0+\tilde{L}_1$, where $\tilde{L}_0$ and $\tilde{L}_1$ are the strict transforms, and $E$ be exceptional divisor. Then the blow-up morphism $\pi\colon (\tilde{X},\tilde{D})\to (X,D)$ is crepant.
\end{exe}

\subsection{Non-degenerate functions}
We recall the notion of non-degenerate functions of Katz and Mochizuki following \cite{QIn26}.
\begin{defn}\label{defn:non-degenerate}
    A regular function $f$ on a smooth quasi-projective variety $U$ is called \textit{non-degenerate}  if 
    \begin{enumerate}
        \item there exists a smooth projective compactification $X$ of $U$ and a divisor $D=\sum_{i=1}^r m_i D_i$ with simple normal crossing support, such that $U$ is the complement of the support of $D$ in $X$;
        \item there exist a global section $s_0\in \Gamma(X,\cc{O}_X(D))$ and global sections $\sigma_i\in \Gamma(X,\cc{O}_X(D_i))$ such that $f$ agrees with the function
            $$\frac{s_0}{s_\infty}\colon X\setminus Z(\sigma_1^{m_1}\dots \sigma_r^{m_r})\to \bb{A}^1,$$
        where $s_\infty=\sigma_1^{m_1}\dots \sigma_r^{m_r}$ and $Z(s_0)$ is transversal to $Z(\sigma_1^{m_1}\dots \sigma_r^{m_r})$.
    \end{enumerate}
    For simplicity,  we call such a triple $(X,D,f)$ a \textit{pair with non-degenerate function}.
\end{defn}

Unlike the tame compactified Landau--Ginzburg models in \cite{KatzarkovEtAlBogomolovTian17}, we require neither the Calabi--Yau condition $K_X+D=0$ nor that $f$ extend to a morphism $X\to \mathbb{P}^1$.  

\begin{exe}\label{exe:Laurent-Polynomial}
		One major source of non-degenerate functions is from non-degenerate Laurent polynomials. By \cite[\S~4]{Batyrev93}, a Laurent polynomial $f$ on a torus is non-degenerate if and only if its zero locus in the toric compactification $X_f$ determined by the Newton polytope $\Delta(f)$ is transversal to the boundary divisor $D_f$. When $f$ is convenient, i.e., $0$ is in the interior of the Newton polytope of $f$, the pole divisor $P_f$ has support $D_f$. Hence, a convenient non-degenerate Laurent polynomial is non-degenerate on $(X_f,P_f)$ in the sense of \cref{defn:non-degenerate}. 

		Other examples arise naturally from mirror symmetry. Let $f\colon U\to\mathbb A^1$ admit a smooth projective compactification $\bar f\colon X\to\mathbb P^1$ such that $D=\bar f^*(\infty)$ has simple normal crossing support and $U=X\setminus|D|$. If $c$ is a regular value of $f$, then $\bar f^{-1}(c)$ is smooth and disjoint from $D$. Hence $(X,D,f-c)$ is a pair with a non-degenerate function, and replacing $f$ by $f-c$ changes neither the twisted de Rham complex nor its irregular Hodge numbers. In particular, the proper Landau--Ginzburg models used to verify the Katzarkov--Kontsevich--Pantev conjecture for del Pezzo surfaces and Fano threefolds fit into our setting \cite{LUNTS2018189,HarderHodgenumbers21,CP25}; see also Remark~\ref{rmk:ghv-log-CY-compactifications}.
\end{exe}

However, we don't allow horizontal divisors in \cref{defn:non-degenerate}, i.e., the pole divisor of $f$ is $D$. In fact, pairs with non-degenerate functions are analogous to smooth projective varieties. For example, the associated exponential mixed Hodge structures are pure, cf. \cref{prop:purity-EMHS}. 

For a general Landau--Ginzburg model $(U,f)$, by resolution of singularities, any Landau--Ginzburg model $(U,f)$ admits a smooth projective compactification $(X,D)$ such that $D=X\setminus U$ is a simple normal crossing divisor and $Z(f)$ intersects $D$ transversally. In general, $D=D_h+ D_v$ can be decomposed as the horizontal divisor and the pole divisor of $f$, and $f$ extends to a function $F$ on $X-D_v$. Then, the triple  $(X,D_v,F)$ is a pair with non-degenerate functions.

\begin{defn}\label{defn:associated-compactification}
	A \textit{non-degenerate compactification} of a Landau--Ginzburg model $(U,f)$ is a pair with non-degenerate functions $(X,D_v,F)$ such that $U$ is a Zariski open subset of $X$ with complement $D=D_v+D_h$ a divisor with simple normal crossing support, and $F|_{U}=f$. If $K_X+D_v=0$, we call it a \textit{Calabi--Yau} compactification.
\end{defn}

\begin{exe}\label{exe:quadric}
	Consider the convenient but degenerate Laurent polynomial $f=y+\frac{(1+x)^2}{xy}$ on the two-dimensional torus $T$. In the toric compactification $X_f$, the zero locus $Z(f)$ intersects $P_f=X_f-T$ non-transversally. Blowing up the base locus twice gives a triple $(X,D=D_h+D_v,F)$ with $X-D=T$ and $F|_T=f$. Then $(X,D_v,F)$ is a non-degenerate compactification of $(T,f)$. Setting $U=X\setminus|D_v|$, we have $\dim\mathrm{H}^2_{\rr{dR}}(U,F)=4$, whereas $\dim\mathrm{H}^2_{\rr{dR}}(T,f)=3$.
\end{exe}

\subsection{Crepant equivalences of pairs with non-degenerate functions}
Next, we introduce the notion of crepant equivalence for pairs with non-degenerate functions.
 
\begin{defn}\label{defn:crepant}
	We say two pairs with non-degenerate functions $(X,D_X,f)$ and $(Y, D_Y,g)$ are \textit{crepant} if there exist birational morphisms $\varphi: Z\to X$ and $\psi\colon Z\to Y$ from a common smooth projective variety $Z$ such that $(X,D_X)$ and $(Y,D_Y)$ are crepant, 
  	and the two functions $f\circ \varphi$ and $g\circ \psi$ agree on a Zariski dense open subset of $Z$.  
	
	A morphism $\pi\colon X\to Y$ is a \textit{crepant morphism of pairs with non-degenerate functions} if it induces a crepant morphism of pairs $\pi\colon (X,D_X)\to (Y,D_Y)$ and $f=g\circ \pi$ on a Zariski dense open subset of $X$. 
\end{defn}

\begin{rmk}\label{rmk:proper-crepant-k-equivalence}
	Suppose that $f$ and $g$ extend to morphisms $\bar f\colon X\to\bb{P}^1$ and $\bar g\colon Y\to\bb{P}^1$, respectively. Then $D_X=\bar f^*(\infty)$ and $D_Y=\bar g^*(\infty)$. If the functions are compatible on a common smooth projective model $Z$ as in \cref{defn:crepant}, then
	$$
	\varphi^*D_X=(\bar f\circ\varphi)^*(\infty)
	=(\bar g\circ\psi)^*(\infty)=\psi^*D_Y.
	$$
	Consequently,
	$$
	\varphi^*(K_X+D_X)=\psi^*(K_Y+D_Y)
	\quad\Longleftrightarrow\quad
	\varphi^*K_X=\psi^*K_Y.
	$$
	Thus, for compactified Landau--Ginzburg models compatible over $\bb{P}^1$, the crepant condition in \cref{defn:crepant} is precisely the usual $K$-equivalence of $X$ and $Y$.
\end{rmk}

\begin{exe}\label{exe:an-example}
We take the crepant morphism $\pi\colon (\tilde{X},\tilde{D})\to (X,D)$ from \cref{exe:crepant} and look at the non-degenerate functions.

For $(X,D)$, the interior $U:=X\setminus|D|$ is the torus $\bb{G}_{m}^2$. The non-degenerate functions $f$ on $(X,D)$ are of the form 
    $$\frac{F}{x_0x_1x_2}\colon U\to \bb{A}^1$$
where $F$ is a cubic homogeneous polynomial in $x_0,x_1$, and $x_2$ whose zero locus $Z(F)$ is transverse to $D$.

For $(\tilde{X},\tilde{D})$, the interior $\tilde{U}:=\tilde{X}\setminus|\tilde{D}|$ is the  union $\bb{G}_{m}^2\cup \bb{A}^1$. The non-degenerate functions $\tilde{f}$ on $(\tilde{X},\tilde{D})$ are of the form 
    $$\frac{\tilde F}{x_0x_1\tilde{x}_2}\colon \tilde{U}\to \bb{A}^1$$
where $\tilde F$ is the pullback of a cubic homogeneous polynomial $F$ such that $Z(F)$ is transverse to $D$ and passes through $p$. 

One can verify that $(\tilde{X},\tilde{D},\tilde{f})$ is crepant to $(X,D,f)$ if $f$ and $\tilde {f}$ are defined using the same cubic homogeneous polynomials.
\end{exe}

Notice that, in contrast to \cref{exe:crepant}, the crepant morphism in \cref{exe:crepant-2} cannot be upgraded to a crepant equivalence of pairs with non-degenerate functions. 

\begin{prop}\label{prop:not-extendable}
	For the crepant blow-up morphism $\pi\colon (\tilde{X},\tilde{D})\to (X,D)$ in \cref{exe:crepant-2}, there does not exist non-degenerate functions $f$ and $\tilde{f}$ on $(X,D)$ and $(\tilde{X},\tilde{D})$ respectively such that $(\tilde{X},\tilde{D},\tilde{f})$ is crepant to $(X,D,f)$. 
\end{prop}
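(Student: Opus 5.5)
The plan is a direct computation. I would show that the crepancy condition on functions forces the numerator of $\tilde f$ to come from a quadric singular at $p$, and that such a quadric violates the transversality required of $f$.

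\emph{Step 1: the general shape of $f$ and $\tilde f$.} On $X=\bb{P}^3$ with $D=L_0+L_1$, each $L_i$ is a plane, so $\cc{O}_X(L_i)\simeq\cc{O}(1)$. The only section vanishing exactly on $L_i$ is $x_i$, up to a scalar, so $\sigma_i=x_i$. Hence, by \cref{defn:non-degenerate}, a non-degenerate function on $(X,D)$ has the form
$$
f=\frac{Q}{x_0x_1},
$$
with $Q$ a quadric form and $Z(Q)$ transversal to $L_0\cup L_1$. Transversality means that $Z(Q)$ is smooth along $|D|$ and meets $L_0$, $L_1$ and the line $L_0\cap L_1$ transversally. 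In particular, $Z(Q)$ is smooth at every point of $L_0\cap L_1$ that it contains.

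On $\tilde X$ we have $\pi^*L_i=\tilde L_i+E$, so $\cc{O}_{\tilde X}(\tilde D)\simeq\pi^*\cc{O}(2)(-2E)$. Using $\pi^*(x_0x_1)=\tilde x_0\tilde x_1\cdot e^2$, where $e$ is the section cutting out $E$, a non-degenerate $\tilde f$ has the form $\tilde Q/(\tilde x_0\tilde x_1)$. Here $\tilde Q\in\Gamma(\tilde X,\pi^*\cc{O}(2)(-2E))$, which is exactly $\pi^*Q'/e^2$ for a quadric $Q'$ with $\mathrm{mult}_p Q'\ge 2$.

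\emph{Step 2: compatibility of the functions.} Crepancy in the sense of \cref{defn:crepant} requires $f\circ\varphi=\tilde f\circ\psi$ on a dense open subset of some common model $Z$. Since $\varphi$ and $\psi$ are birational, this is equivalent to the equality $\pi^*f=\tilde f$ of rational functions on $\tilde X$. We also have
$$
\pi^*f=\frac{\pi^*Q}{\pi^*(x_0x_1)}=\frac{\pi^*Q/e^2}{\tilde x_0\tilde x_1},
$$
so comparing with $\tilde f$ gives $Q=Q'$ as forms. In particular $Q$ vanishes to order at least $2$ at $p$. I should check that no subtlety arises from $\pi^*Q/e^2$ needing to be regular: this is automatic, because $\tilde Q$ is a regular section and $Q'=Q$.

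\emph{Step 3: contradiction.} The point $p$ lies on $L_0\cap L_1\subset |D|$, and by Step 2 the surface $Z(Q)$ is singular at $p$. This contradicts the transversality of $Z(Q)$ to $D$ from Step 1, so $f$ cannot be non-degenerate.

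The main point needing care is Step 1 for $\tilde f$. One must verify that the sections of $\cc{O}_{\tilde X}(\tilde D)$ are exactly the pullbacks of quadrics of multiplicity at least $2$ at $p$, divided by $e^2$. One must also check that the factorization $s_\infty=\tilde x_0\tilde x_1$ is forced, because $E$ is not part of $\tilde D$. Both follow from $\pi_*\cc{O}_{\tilde X}(-2E)=\mathfrak m_p^2$ for the blow-up of a smooth point.
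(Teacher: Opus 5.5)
Your proposal is correct and is essentially the paper's argument. Both reduce the compatibility of the functions to $\tilde f=\pi^*f$ as rational functions, and then observe that regularity along $E\subset\tilde X\setminus|\tilde D|$ forces $\mathrm{mult}_p Q\ge 2$, which contradicts the transversality of $Z(Q)$ to $D$ at $p\in L_0\cap L_1$. The paper reads this off from $\operatorname{div}(\pi^*f)=\widetilde{Z(Q)}-\tilde L_0-\tilde L_1+(m-2)E$, while you phrase it through $\Gamma(\tilde X,\cc{O}(\tilde D))\cong\Gamma(\bb{P}^3,\cc{O}(2)\otimes\mathfrak m_p^2)$; this is the same computation, and your version describes the admissible $\tilde f$ more accurately than the paper's sketch does.
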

\begin{proof}
	For $(X,D)$, the interior $U:=X\setminus|D|=\mathbb{G}_{m}\times \mathbb{A}^2$.  A non-degenerate function $f$ on $(X,D)$ is of the form 
		$$\frac{F}{x_0x_1}\colon U\to \bb{A}^1$$
	where $F$ is a quadratic homogeneous polynomial such that $Z(F)$ is transverse to $D$.

	For $(\tilde{X},\tilde{D})$, the interior $\tilde U:=\tilde X\setminus|\tilde D|=(\mathbb{G}_{m}\times \mathbb{A}^2)\cup (\mathbb{G}_{m}\times \mathbb{A}^1)$. A non-degenerate function $\tilde f$ on $(\tilde{X},\tilde{D})$ is of the form 
	$$\frac{\tilde F}{\tilde{x}_0\tilde{x}_1}\colon \tilde{U}\to \bb{A}^1$$
	where $\tilde F$ is the pullback of a quadratic homogeneous polynomial $F$ such that $Z(F)$ is transverse to $D$ and passes through $p$.

	 If there exist $f$ and $\tilde f$ such that $(\tilde{X},\tilde{D},\tilde{f})$ is crepant to $(X,D,f)$, then $\tilde f$ would agree with $\pi^*f$ on the dense open set $\tilde X\setminus(\tilde D\cup E)\cong X\setminus(D\cup\{p\})$, hence $\tilde f=\pi^*f$ as rational functions. Write $f=F/(x_0x_1)$ with $F$ quadratic and let $m=\operatorname{mult}_p(F)$. Since $\pi^*L_i=\tilde L_i+E$, we have
$$
\operatorname{div}(\pi^*f)=\widetilde{Z(F)}-\tilde L_0-\tilde L_1+(m-2)E.
$$
For $\pi^*f$ to be regular on $\tilde X\setminus\tilde D$, the coefficient of $E$ in the polar part must vanish, forcing $m\ge 2$. However, transversality of $Z(F)$ to $D$ at $p$ means $p$ is a smooth point of the quadric $Z(F)$, so $m=1$. Thus $\pi^*f$ has a simple pole along $E$ and is not regular on the new open stratum $E\setminus(\tilde L_0\cup \tilde L_1)\cong\mathbb G_m\times\mathbb A^1$ inside $\tilde X\setminus\tilde D$. No such $\tilde f$ exists.
\end{proof}

\begin{rmk}\label{rmk:not-extendable}
	Because of \cref{prop:not-extendable}, our main theorem does not apply to \cref{exe:crepant-2}. By \cite[Thm.~1.1.1 \& Thm.~1.1.3]{QIn26}, the irregular Hodge numbers of $(X\setminus|D_X|,f)$ and $(\tilde{X}\setminus|D_{\tilde{X}}|,\tilde{f})$ depend only on the underlying pairs. They can be computed from $\rr{H}^i(U,f\inv(t))$ and $\rr{H}^i(\tilde{U},\tilde{f}\inv(t))$, respectively, for $t$ sufficiently large.
	
	One can show that $\rr{H}^3(U,f\inv(t))$ is an extension of $\bb{Q}(-1)$ and $\bb{Q}(-2)$, while $\rr{H}^3(\tilde{U},\tilde{f}\inv(t))$ vanishes. Thus the compatibility of the functions in \cref{intro::main} cannot be omitted.
\end{rmk}

\section{Recollections on motivic integration}\label{sec:motivic-integrations}
In this section, we gather some basic definitions and properties of motivic integration, following \cite{Chambert-LoirLoeserMotivicheight16,BiluMotivicEuler23}.

\subsection{The Grothendieck group of varieties with potentials}

\subsubsection{Over a field}
Let $k$ be a field of characteristic $0$. A $k$-variety will be a separated $k$-scheme of finite type. 

\begin{defn}\label{defn::KExpVark}
The \textit{Grothendieck group of varieties with potentials} $\mathrm{KExpVar}_k$ (cf. \cite{Chambert-LoirLoeserMotivicheight16}) is the group generated by pairs $(U,f)$ consisting of a $k$-variety $U$ and a regular function $f\colon U\to \bb{A}^1_k$, subject to the relations
\begin{itemize}
	\item[(i).] $(U,f)=(V,f\circ u)$ for $u\colon V\to U$ an isomorphism of $k$-varieties;
	\item[(ii).] $(U,f)=(V,f\mid_V)+(Z,f\mid_Z)$ for $Z$ a closed subscheme of $U$ and $V=U\setminus Z$ its complement;
	\item[(iii).] $(U\times_k\bb{A}^1_k,\pr_2)=0$ for every $k$-variety $U$.
\end{itemize}
\end{defn}
We will denote the class of a pair $(U,f)$ in $\KEXP$ by $[U,f]_{k}$. We may further equip $\mathrm{KExpVar}_k$ with a ring structure by setting
	\begin{equation}\label{eq::product_in_KExpVark}
		[U,f]_{k}\cdot [V,g]_{k}:=[U\times V,f\boxplus g]_{k},
	\end{equation}
where $f\boxplus g:=f\circ \pr_1+g\circ\pr_2$ is the Thom--Sebastiani sum.
As in the case of the Grothendieck ring of $k$-varieties, we denote by $\mathbb{L}$ the class $[\bb{A}^1_k,0]$. Let $S=\{\mathbb{L}, \mathbb{L}^n-1\mid n\geq 1\}$. Then the localization $\mathrm{KExpVar}_k[S\inv]$ will be denoted by $\mathcal{E}xp\mathcal{M}_k$.

We define the ring $\KEXP^{sm,qproj}$ in a way similar to \cref{defn::KExpVark}, by considering only pairs $(U,f)$ with $X$ smooth and quasi-projective. There is a natural ring homomorphism
	$$\varphi^{sm,qproj}\colon\KEXP^{sm,qproj} \to \KEXP.$$

\begin{lem}\label{lem::sm-qproj}
	The natural homomorphism $\varphi^{sm,qproj}$ is an isomorphism.
\end{lem}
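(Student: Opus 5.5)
I will prove the lemma by constructing an explicit inverse homomorphism $\psi\colon \KEXP\to\KEXP^{sm,qproj}$. The construction uses stratification of an arbitrary pair into smooth quasi-projective pieces.

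\emph{Definition of $\psi$.} Let $(U,f)$ be a $k$-variety with a regular function. Since $k$ has characteristic $0$, I choose a finite stratification $U=\bigsqcup_j U_j$ into locally closed subvarieties, each smooth and quasi-projective. This is possible: take affine opens, then peel off the smooth locus of the reduced structure by noetherian induction; the relation (ii) is insensitive to nilpotents since $U_{\rr{red}}\to U$ has empty complement. I then set
$$\psi(U,f)=\sum_j [U_j,f|_{U_j}]^{sm,qproj}.$$

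\emph{Independence of the stratification.} Given two stratifications, I pass to a common refinement consisting of smooth quasi-projective strata; intersect the strata and restratify each intersection. It then suffices to show that refining one smooth quasi-projective stratum $U_j$ into smooth quasi-projective pieces does not change its class in $\KEXP^{sm,qproj}$. Let $U_j=\bigsqcup_l V_l$ be such a refinement. By noetherian induction, there is an open piece $V$ of the refinement that is open in $U_j$; its complement $Z$ is closed in $U_j$, but it need not be smooth. Relation (ii) in $\KEXP^{sm,qproj}$ only applies with $Z$ smooth, since $Z$ must itself be smooth quasi-projective. I therefore argue instead with the following claim: for smooth quasi-projective $W$ and any closed $Z\subset W$, the class $[W]$ equals $[W\setminus Z]$ plus the sum over a smooth stratification of $Z$. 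This follows by induction on $\dim Z$. Let $Z^{sing}$ be the singular locus of $Z$, together with the lower-dimensional components. Then $W\setminus Z^{sing}$ is smooth, and $Z\setminus Z^{sing}$ is a smooth closed subset of it, so relation (ii) applies there. Applying induction to $Z^{sing}\subset W$ finishes the claim. Using this claim, two stratifications give the same sum.

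\emph{Compatibility with the relations.} I check that $\psi$ respects relations (i)--(iii). Relation (i) is clear. For relation (ii), I concatenate stratifications of $V$ and $Z$. For relation (iii), I stratify $U$ into smooth quasi-projective pieces $U_j$; then $U_j\times\bb{A}^1$ is smooth and quasi-projective, and relation (iii) holds in $\KEXP^{sm,qproj}$. Finally, $\psi$ is multiplicative because products of strata stratify the product.

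\emph{Mutual inverses.} The composite $\varphi^{sm,qproj}\circ\psi$ is the identity by relation (ii) in $\KEXP$. The composite $\psi\circ\varphi^{sm,qproj}$ is the identity by taking the trivial stratification.

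The main obstacle is the well-definedness step. There, relation (ii) in the smooth ring is only available for smooth closed subsets, and the induction on the dimension of the singular locus of $Z$ handles this.
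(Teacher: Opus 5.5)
Your overall strategy (an inverse defined by cutting a pair into smooth quasi-projective strata and carrying $f$ along) is the classical argument that the paper simply cites from Chambert-Loir--Nicaise--Sebag, so the route is right. The trouble is exactly at the step you flag as the main obstacle.

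Your auxiliary claim, as proved, only gives $[W]=[W\setminus Z]+\sum_k[T_k]$ for the \emph{particular} stratification $T_0=Z\setminus Z^{sing}$, $T_1=Z^{sing}\setminus(Z^{sing})^{sing}$, \dots\ that your induction produces. What you need, however, is $[U_j]=[V]+\sum_{l\neq l_0}[V_l]$ for the \emph{given} pieces $V_l$ of $Z=U_j\setminus V$. Identifying $\sum_k[T_k]$ with $\sum_{l\neq l_0}[V_l]$ is precisely independence of the stratification, now for the possibly singular $Z$. So ``using this claim, two stratifications give the same sum'' is circular as written.

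Separately, a piece that is open in $U_j$ need not exist. Take $U_j=C_1\sqcup C_2$, two disjoint lines, and points $q_i\in C_i$. The smooth locally closed pieces $(C_1\setminus\{q_1\})\sqcup\{q_2\}$ and $(C_2\setminus\{q_2\})\sqcup\{q_1\}$ partition $U_j$, and neither is open. This step can be avoided, as below.

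The fix is to prove the claim for an \emph{arbitrary} smooth stratification $Z=\bigsqcup_m S_m$, by induction on $e=\dim Z$, using the induction hypothesis twice.
\begin{itemize}
\item Let $B\subset Z$ be the union of $Z^{sing}$ and the frontiers $\overline{S_m}\setminus S_m$. Then $\dim B<e$, and $Z^\circ:=Z\setminus B$ is smooth and closed in $W\setminus B$.
\item Each $S_m\cap Z^\circ=\overline{S_m}\cap Z^\circ$ is closed, hence open and closed, in $Z^\circ$. Relation (ii) then gives $[W\setminus B]=[W\setminus Z]+\sum_m[S_m\cap Z^\circ]$.
\item Stratify each $S_m\cap B$ into smooth pieces $R_{m,r}$. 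The induction hypothesis for $B\subset W$ gives $[W]=[W\setminus B]+\sum_{m,r}[R_{m,r}]$.
\item The induction hypothesis for the closed subset $S_m\cap B$ of the smooth variety $S_m$ gives $[S_m]=[S_m\cap Z^\circ]+\sum_r[R_{m,r}]$.
\end{itemize}
Combining these, $[W]=[W\setminus Z]+\sum_m[S_m]$. Taking $Z=W=U_j$ yields refinement invariance directly, with no open piece needed. The rest of your argument (relations (i)--(iii), multiplicativity, mutual inverses) then goes through.
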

\begin{proof}
	Apart from keeping track of the regular functions, the proof is identical to the one in the classical case; see \cite[Ch.~2, Prop.~2.6.5]{Chambert-LoirEtAlMotivicIntegration18}.
\end{proof}

\subsubsection{Over a general base}

The Grothendieck group $\mathrm{KExpVar}_k$ introduced above is the absolute case, corresponding to $S=\operatorname{Spec} k$. In motivic integration, however, one needs to consider families of varieties over a base $S$; in particular, the Grothendieck groups of varieties over jet spaces $\scr{L}_n(X)$ will play an essential role later.  

Let $S$ be a $k$-variety. The \textit{Grothendieck group of $S$-varieties with potentials} $\mathrm{KExpVar}_S$ is defined in the same way as \cref{defn::KExpVark}, except that every $k$-variety is replaced by an $S$-variety. The class of a pair $(U\to S,f)$ is denoted by $[U,f]_S$, and the product is given by the Thom--Sebastiani sum, exactly as in \eqref{eq::product_in_KExpVark}.

The relative theory is functorial in the base. Let $\varphi\colon S\to T$ be a morphism of $k$-varieties. On the one hand, viewing an $S$-variety as a $T$-variety via $\varphi$ gives the \textit{pushforward} (or forgetful) homomorphism
\begin{align*}
	\varphi_!\colon \mathrm{KExpVar}_S&\to \mathrm{KExpVar}_T,\\
	[U,f]_S&\mapsto [U,f]_T.
\end{align*}
On the other hand, pulling back a $T$-variety along $\varphi$ gives the \textit{pullback} homomorphism
\begin{align*}
	\varphi^*\colon \mathrm{KExpVar}_T&\to \mathrm{KExpVar}_S,\\
	[V,g]_T&\mapsto [V\times_T S,g\circ \pr_1]_S.
\end{align*}
These two operations will be used constantly when integrating motivic residual functions over arc spaces and applying the change of variable formula.

\subsection{Motivic measure} 
Let $A$ be a ring. An $A$-valued \textit{motivic measure} on $\mathrm{KExpVar}_S$ is a ring homomorphism 
	$$\mu\colon \mathrm{KExpVar}_S\to A$$
such that 
\begin{enumerate}
	\item $\mu$ maps the three kinds of relations in \cref{defn::KExpVark} to $0$;
	\item $\mu([U,f]_S\cdot [V,g]_S)=\mu([U,f]_S)\cdot \mu([V,g]_S)$.
\end{enumerate}
The aim of this subsection is to construct a motivic measure taking values in $K_0(\rr{EMHS})$, where $\rr{EMHS}$ is the category of exponential mixed Hodge structures introduced by Kontsevich and Soibelman \cite{KontsevichSoibelmanCohomologicalHall11} as an irregular analogue of Deligne's mixed Hodge structures.

\subsubsection{Exponential mixed Hodge structures}

Let $k$ be $\mathbb{C}$. For any complex algebraic variety $X$, there is an abelian category $\mathrm{MHM}(X)$ of algebraic mixed Hodge modules, whose bounded derived category admits the six-functor formalism; see \cite{SaitoMixedHodge90} for details. The \textit{exponential mixed Hodge structures} are mixed Hodge modules on the affine line whose global cohomology vanishes, as introduced by Kontsevich and Soibelman in \cite{KontsevichSoibelmanCohomologicalHall11}; see also \cite[Appx.]{FresanEtAlHodgetheory22}. Concretely, let $\pi\colon \mathbb{A}^1\to\rr{Spec}\,\mathbb{C}$ be the structure morphism and define the full subcategory
$$
\mathrm{EMHS}\subset \mathrm{MHM}(\mathbb{A}^1)
$$
consisting of those $\mathcal{M}\in\mathrm{MHM}(\mathbb{A}^1)$ with $\pi_*\mathcal{M}=0$. There is an exact idempotent endofunctor
\begin{equation}\label{eq:Pi-functor}
    \Pi\colon \mathrm{MHM}(\mathbb{A}^1)\to\mathrm{MHM}(\mathbb{A}^1),
\end{equation}
defined by
\begin{equation}\label{eq:Pi}
    \Pi(\mathcal{M}) \;:=\; \operatorname{sum}_*\bigl(j_!\mathbb{Q}_{\mathbb{G}_m}^{\mathrm{H}}\boxtimes\mathcal{M}\bigr)
    \;=\; \mathcal{M}\star(j_!\mathbb{Q}_{\mathbb{G}_m}^{\mathrm{H}}),
\end{equation}
where $j\colon\mathbb{G}_m\hookrightarrow\mathbb{A}^1$ is the inclusion, $\operatorname{sum}\colon\mathbb{A}^1\times\mathbb{A}^1\to\mathbb{A}^1$ is addition, and $\star$ is the additive convolution, defined as $M\star N:=\rr{sum}_*(M\boxtimes N)$.

The functor $\Pi$ is an exact projector onto $\mathrm{EMHS}$: its essential image equals $\mathrm{EMHS}$, and $\Pi(\mathcal{N})=\mathcal{N}$ for $\mathcal{N}\in\mathrm{EMHS}$. In particular, any constant Hodge module on $\mathbb{A}^1$ is annihilated by $\Pi$. 

The de Rham fiber functor is an exact functor
\begin{equation}\label{eq:de_rham_fiber}
	\Xi_{\rr{dR}}\colon \mathrm{EMHS}\to \rr{Vect}_{\bb{C}},
    \quad 
    \mathcal{M}\mapsto\rr{H}^1_{\rr{dR}}(\bb{A}^1,\mathcal{M}\otimes \mathcal{E}^t)
\end{equation}
where $\mathcal{E}^t$ is the exponential $\scr{D}_{\bb{A}^1}$-module $(\cc{O},\rr{d}+\rr{d}t)$. Alternatively, it sends $\mathcal{M}$ to the fiber at $1\in \bb{A}^1$ of the Fourier transform of $\mathcal{M}$. 

Let $X$ be an algebraic variety of dimension $n$. For a regular function $g\colon X\to \bb{A}^1$ and an integer $i$, we consider the following exponential mixed Hodge structures:
\begin{equation}\label{eq:EMHS-functions}
	\mathrm{H}^i(X,g):=\Pi(\mathcal{H}^{i-n}g_*\bb{Q}_X^{\rr{H}}),
    \quad
     \mathrm{H}^i_c(X,g):=\Pi(\mathcal{H}^{i-n}g_!\bb{Q}_X^{\rr{H}}).
\end{equation}
The de Rham fiber of $\mathrm{H}^i(X,g)$ is $\mathrm{H}^i_{\rr{dR}}(X,g)$, and the de Rham fiber of $\mathrm{H}^i_c(X,g)$ is $\mathrm{H}^i_{\rr{dR},c}(X,g)$. 

We equip the category $\KEXP$ of regular functions with the product induced by additive convolution. Then we have the following:

\begin{prop}\label{prop::motivic-measure}
	There is a ring homomorphism 
		$$\mu\colon \KEXP\to \rr{K}_0(\rr{EMHS}).$$
\end{prop}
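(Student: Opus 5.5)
The plan is to define $\mu$ on generators by the alternating sum of compactly supported exponential cohomologies,
$$
\mu([U,f]_{\bb{C}}):=\sum_i(-1)^i\,[\rr{H}^i_c(U,f)]\in \rr{K}_0(\rr{EMHS}),
$$
where $\rr{H}^i_c(U,f)=\Pi(\mathcal{H}^{i-n}f_!\bb{Q}_U^{\rr{H}})$ as in \eqref{eq:EMHS-functions}. Equivalently, I will use the class of the complex $\Pi(f_!\bb{Q}^{\rr H}_U)$ in $\rr{K}_0(\rr{EMHS})$. I will work with $f_!$ because it behaves well with respect to stratifications and products. The alternating sum is finite since $f_!\bb{Q}^{\rr H}_U$ is a bounded complex of mixed Hodge modules, and $\Pi$ is exact, so $\Pi$ commutes with taking cohomology sheaves. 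With this definition, $\mu([U,f])$ is the image of $[f_!\bb{Q}_U^{\rr H}]\in \rr{K}_0(\rr{MHM}(\bb{A}^1))$ under the group homomorphism $\rr{K}_0(\Pi)$.

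Next I will check the three relations of \cref{defn::KExpVark}. Relation (i) is clear, because an isomorphism $u$ gives $(f\circ u)_!\bb{Q}^{\rr H}_V\simeq f_!\bb{Q}^{\rr H}_U$. For (ii), with $Z\subset U$ closed, $V=U\setminus Z$ and inclusions $j,i$, there is the triangle $j_!j^*\bb{Q}^{\rr H}_U\to\bb{Q}^{\rr H}_U\to i_*i^*\bb{Q}^{\rr H}_U\xrightarrow{+1}$. Applying $f_!$ gives a triangle in $D^b\rr{MHM}(\bb{A}^1)$, and hence additivity in $\rr{K}_0$. For (iii), we have $\pr_{2!}\bb{Q}^{\rr H}_{U\times\bb{A}^1}\simeq \rr{H}^\bullet_c(U)\otimes\bb{Q}^{\rr H}_{\bb{A}^1}$, which is a complex of constant Hodge modules on $\bb{A}^1$, by base change and Künneth. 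Its cohomology sheaves are constant, and $\Pi$ kills them, so the class is $0$. Therefore $\mu$ is well defined as a group homomorphism.

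For multiplicativity, I will first equip $\rr{K}_0(\rr{EMHS})$ with the product induced by $\star$. I expect this to be the main obstacle. One needs that $\star$ of two objects of $\rr{EMHS}$ lands, after taking cohomology, in $\rr{EMHS}$, and that the induced product on $\rr K_0$ is well defined. The way I would handle this is through the identity $\Pi(M)\star\Pi(N)\simeq\Pi(M\star N)$. This identity follows from the associativity and commutativity of additive convolution together with idempotence of $\Pi$, i.e. $j_!\bb{Q}^{\rr H}_{\bb{G}_m}\star j_!\bb{Q}^{\rr H}_{\bb{G}_m}\simeq j_!\bb{Q}^{\rr H}_{\bb{G}_m}$ up to the shift convention implicit in \eqref{eq:Pi}. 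I would also use the standard fact that for objects in $\rr{EMHS}$ the convolution $\rr{sum}_*$ agrees with $\rr{sum}_!$ and is concentrated in one degree. This is the Kontsevich--Soibelman argument, and I would cite \cite[Appx.]{FresanEtAlHodgetheory22} for it. Given this, it suffices to verify
$$
(f\boxplus g)_!\bb{Q}^{\rr H}_{U\times V}\simeq \rr{sum}_!\bigl(f_!\bb{Q}^{\rr H}_U\boxtimes g_!\bb{Q}^{\rr H}_V\bigr).
$$
This holds because $f\boxplus g=\rr{sum}\circ(f\times g)$, $(f\times g)_!$ is compatible with $\boxtimes$ (Künneth for $!$-pushforward in $D^b\rr{MHM}$), and $\bb{Q}^{\rr H}_{U\times V}=\bb{Q}^{\rr H}_U\boxtimes\bb{Q}^{\rr H}_V$. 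Applying $\Pi$ and using the identity above yields $\mu([U,f][V,g])=\mu([U,f])\cdot\mu([V,g])$.

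Finally, I will check the unit. We have $\mu([\mathrm{pt},0])=[\Pi(\delta_0)]$, and $\Pi(\delta_0)=j_!\bb{Q}^{\rr H}_{\bb{G}_m}$ (suitably shifted), which is the unit for $\star$ on $\rr{EMHS}$. This completes the verification that $\mu$ is a ring homomorphism.
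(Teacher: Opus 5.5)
\textbf{The gap is in the multiplicativity step, where $\rr{sum}_!$ has to be traded for $\rr{sum}_*$.}

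Künneth gives $(f\boxplus g)_!\mathbb{Q}^{\rr H}_{U\times V}\simeq M\star_! N$ with $M=f_!\mathbb{Q}^{\rr H}_U$ and $N=g_!\mathbb{Q}^{\rr H}_V$. Your identity $\Pi(M)\star\Pi(N)\simeq\Pi(M\star N)$, obtained from associativity and idempotence, holds only for the $*$-convolution. The reason is that $\Pi$ is itself $*$-convolution with $E:=j_!\mathbb{Q}^{\rr H}_{\bb G_m}[1]$, and no associativity mixes $\rr{sum}_!$ and $\rr{sum}_*$.

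The fact you cite to bridge this does not do so, for two reasons. First, it concerns objects of EMHS, while $M$ and $N$ are arbitrary objects of $D^b\rr{MHM}(\bb A^1)$. Second, it is false as stated. For $t\neq 0$, the stalk of $E\star_*E\simeq E$ at $t$ is $\mathbb{Q}[1]$. The stalk of $E\star_!E$ at $t$ is $R\Gamma_c(\bb A^1\setminus\{0,t\},\mathbb{Q})[2]$, whose cohomology has total dimension $3$. For objects of EMHS one only has $\Pi(M\star_!N)\simeq M\star_*N$; the two convolutions agree only after applying $\Pi$.

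\textbf{The missing ingredient, which the paper supplies:} for arbitrary $A,B\in D^b\rr{MHM}(\bb A^1)$, the cone of the forget-supports map $A\star_!B\to A\star_*B$ has constant cohomology.
\begin{itemize}
\item After the change of coordinates $(x,y)\mapsto(x,x+y)$, $\rr{sum}$ becomes the projection $\bb A^1_x\times\bb A^1_t\to\bb A^1_t$.
\item Compactifying in $x$, the cone is computed on the boundary $\{\infty\}\times\bb A^1_t$.
\item Near that boundary, the transported $A\boxtimes B$ is locally constant. So the cone is a local system on $\bb A^1_t$, hence constant.
\end{itemize}
Since $\Pi$ kills constant objects, $\Pi(A\star_!B)\simeq\Pi(A\star_*B)\simeq\Pi A\star\Pi B$. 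Taking $A=M$ and $B=N$ closes your argument.

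The rest of your proposal is sound. Your proof of relation (iii) shows via Künneth that $\rr{pr}_{2!}\mathbb{Q}^{\rr H}_{U\times\bb A^1}$ is constant. This is more direct than the paper's route through duality, the de Rham fibre and Yu's vanishing lemma, and it needs no reduction to smooth quasi-projective varieties.
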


\begin{proof}

The candidate map is defined as 
$$[U,f]_{k}\mapsto \sum_i (-1)^i[\mathrm{H}^i_c(U,f)],$$
where $\mathrm{H}^i_c(U,f)$ are the exponential mixed Hodge structures defined in \eqref{eq:EMHS-functions}.
By \cref{lem::sm-qproj}, it suffices to verify for smooth quasi-projective varieties $U$ and $V$, and regular functions $f$ on $U$ and $g$ on $V$, that
\begin{itemize}
	\item[(i).] $\mu([U,f]_k)=\mu([V,f\circ u]_k)$ for any isomorphism $u\colon V\to U$;
	\item[(ii).] $\mu([U,f]_k)-\mu([Z,f\mid_Z]_k)-\mu([V,f\mid_V]_k)=0$ for a closed subscheme $Z$ of $U$ and $V=U\setminus Z$;
	\item[(iii).] $\mu([U\times_k\bb{A}^1_k,\pr_2]_k)=0$ for any $k$-variety $U$;
	\item[(iv).] $\mu([U\times_k V,f\boxplus g]_k)=\mu([U,f]_k)\cdot \mu([V,g]_k)$.
\end{itemize}

For (i), the isomorphism $u$ induces $u_!\bb{Q}^{\rr{H}}_V\simeq\bb{Q}^{\rr{H}}_U$, hence 
$$(f\circ u)_!\bb{Q}^{\rr{H}}_V\simeq f_!\bb{Q}^{\rr{H}}_U,$$ so the associated EMHS are isomorphic.

For (ii), use the localization triangle
$$
	j_!\bb{Q}^{\rr{H}}_V\to \bb{Q}^{\rr{H}}_U\to i_!\bb{Q}^{\rr{H}}_Z\xrightarrow{+1}.
$$
Applying $\Pi\circ f_!$ and taking cohomology yields the long exact sequence
$$
	\cdots \to \cc{H}^i\Pi (f|_V)_!\bb{Q}^{\rr{H}}_V \to \cc{H}^i\Pi f_!\bb{Q}^{\rr{H}}_U \to \cc{H}^i\Pi (f|_Z)_!\bb{Q}^{\rr{H}}_Z \to \cc{H}^{i+1}\Pi (f|_V)_!\bb{Q}^{\rr{H}}_V\to \cdots,
$$
which gives 
$$\mu([U,f]_k)-\mu([Z,f\mid_Z]_k)-\mu([V,f\mid_V]_k)=0$$ 
in $\rr{K}_0(\rr{EMHS}).$

For (iii), write $d=\dim(U\times_k\bb{A}^1_k)=\dim U+1$. It suffices to show that each object $$M_r:=\Pi\cc{H}^{r-d}(\pr_2)_!\bb{Q}^{\rr{H}}_{U\times\bb{A}^1}$$ is zero. Since the de Rham fiber functor $\Xi_{\rr{dR}}$ is exact and faithful, it is enough to check that the de Rham fiber of the dual of $M_r$ vanishes. That dual is $$\Pi\cc{H}^{d-r}(\pr_2)_*\bb{Q}^{\rr{H}}_{U\times\bb{A}^1},$$ whose de Rham fiber is $$\rr{H}^{2d-r}_{\rr{dR}}(U\times\bb{A}^1,\pr_2).$$ By \cite[Lem.~3.1(i)]{YuIrregularHodge14}, all the twisted de Rham cohomologies of $(U\times\bb{A}^1,\pr_2)$ vanish; hence $M_r=0$ for every $r$.

For (iv), the K\"unneth formula gives
$$
	(f\times g)_!\bb{Q}^{\rr{H}}_{U\times V}\simeq(f_!\bb{Q}^{\rr{H}}_U)\boxtimes (g_!\bb{Q}^{\rr{H}}_V),
$$
and applying $\mathrm{sum}_!$ yields
$$
	(f\boxplus g)_!\bb{Q}^{\rr{H}}_{U\times V}\simeq(f_!\bb{Q}^{\rr{H}}_U)\star_! (g_!\bb{Q}^{\rr{H}}_V),
$$
where $\star_!$ is the additive convolution, defined as $M\star_!N:=\rr{sum}_!(M\boxtimes N)$.

Recall that $\Pi$ is itself the $*$-convolution with $j_!\mathbb{Q}_{\mathbb{G}_m}^{\mathrm{H}}$, as in \eqref{eq:Pi}, and we have $\Pi\circ\Pi=\Pi$. Notice that for any objects $A,B\in\mathrm{MHM}(\mathbb{A}^1)$, the cone of $A\star_! B\to A\star_* B$ is a constant Hodge module, hence annihilated by $\Pi$. Moreover, $\Pi$ commutes with $*$-convolution because $*$-convolution is associative. Therefore, we obtain
$$
	\Pi(f\boxplus g)_!\bb{Q}^{\rr{H}}_{U\times V}\simeq \Pi(  f_!\bb{Q}^{\rr{H}}_U\star   g_!\bb{Q}^{\rr{H}}_V)
	\simeq(\Pi f_!\bb{Q}^{\rr{H}}_U)\star (\Pi g_!\bb{Q}^{\rr{H}}_V).
$$

The additive convolution $\star$ restricts to an exact bifunctor on $\mathrm{EMHS}$ and induces the product on $K_0(\mathrm{EMHS})$. Consequently,
\begin{equation*}
	\begin{split}
		\mu([U\times V,f\boxplus g]_k)
		&=[\Pi(f\boxplus g)_!\bb{Q}^{\rr{H}}_{U\times V}]\\
		&=[(\Pi f_!\bb{Q}^{\rr{H}}_U)\star  (\Pi g_!\bb{Q}^{\rr{H}}_V)]\\
		&=\sum_{i,j}(-1)^{i+j}[(\cc{H}^i\Pi f_!\bb{Q}^{\rr{H}}_U)\star  (\cc{H}^j\Pi g_!\bb{Q}^{\rr{H}}_V)]\\
		&=\sum_{i,j}(-1)^{i+j}[\cc{H}^i\Pi f_!\bb{Q}^{\rr{H}}_U]\cdot [\cc{H}^j\Pi g_!\bb{Q}^{\rr{H}}_V]\\
		&=[\Pi f_!\bb{Q}^{\rr{H}}_U]\cdot[\Pi g_!\bb{Q}^{\rr{H}}_V]\\
		&=\mu([U,f]_k)\cdot \mu([V,g]_k).
		\qedhere
	\end{split}
\end{equation*}
\end{proof}

Each exponential mixed Hodge structure $M$ has a weight filtration induced from that of $\mathrm{MHM}(\mathbb{A}^1)$, defined as
    $$W_k^{\rr{EMHS}} M:=\Pi(W_k^{\mathrm{MHM}} M),$$
where $W_\bullet^{\mathrm{MHM}}$ is the weight filtration in the category $\mathrm{MHM}(\mathbb{A}^1)$. In general, the weight filtration $W^{\rr{EMHS}}_\bullet$ is different from the weight filtration in $\mathrm{MHM}(\mathbb{A}^1)$. Unless otherwise specified, the weight filtration $W_\bullet$ on an exponential mixed Hodge structure $M$ in this paper always refers to $W_\bullet^{\rr{EMHS}}$.
The exponential mixed Hodge structures $\rr{H}^i(X,g)$, $\rr{H}_c^i(X,g)$ are mixed of weights at least $i$ and mixed of weights at most $i$ respectively by \cite[A.19]{FresanEtAlHodgetheory22}. In particular, for a proper function $g$, we have $\rr{H}^i(X,g)=\rr{H}_c^i(X,g)$ pure of weight $i$.

In \cite[Prop. 5.2.1]{QIn26}, we proved the following result:

\begin{prop}\label{prop:purity-EMHS}
	Let $f$ be a non-degenerate function with respect to $(X,D)$ and set $U=X\setminus|D|$. Then the exponential mixed Hodge structures $\mathrm{H}^i(U,f)$ and $\mathrm{H}^i_c(U,f)$ are isomorphic and pure of weight $i$.
\end{prop}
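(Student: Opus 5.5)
The plan is to prove \cref{prop:purity-EMHS} by reducing to a statement about the mixed Hodge module $f_*\bb{Q}^{\rr{H}}_U$ near infinity. The non-degeneracy condition will be used to control the nearby and vanishing cycles of $f$ at $\infty\in\bb{P}^1$.

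The first step is to compare $\mathrm{H}^i_c(U,f)\to \mathrm{H}^i(U,f)$. The cone of $f_!\bb{Q}^{\rr{H}}_U\to f_*\bb{Q}^{\rr{H}}_U$ is computed by the boundary $|D|$. Using the sections $s_0,\sigma_i$, we resolve the indeterminacy of $s_0/s_\infty$ by blowing up along $Z(s_0)\cap Z(\sigma_j)$, in the manner of the crepant blow-ups in the introduction, to obtain $\bar f\colon \tilde X\to\bb{P}^1$. By transversality of $Z(s_0)$ to $|D|$, the new boundary strata over $\bb{A}^1$ lie over the locus $Z(s_0)\cap |D|$. Locally there the function has the form $f = t\cdot(\text{unit})$ with $t$ a coordinate transverse to the strata, so these strata contribute objects of the form $\bb{A}^1_t\times Z$ with potential $t$. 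The relevant pieces of the cone are therefore (up to extensions) pushforwards to $\bb{A}^1$ that are constant along the fibre direction. Concretely, I would show that the cone of $f_!\to f_*$ restricted to $\bb{A}^1$ is a complex of constant (smooth, with trivial monodromy extending over $\bb{A}^1$) Hodge modules. Since $\Pi$ kills constant Hodge modules, this gives $\mathrm{H}^i_c(U,f)\simeq \mathrm{H}^i(U,f)$. This parallels Katz's and Mochizuki's argument that for non-degenerate $f$ the "forget supports" map is an isomorphism on twisted cohomology; alternatively one could cite the equality of twisted de Rham cohomology with and without compact supports, and use that $\Xi_{\rr{dR}}$ is faithful and exact.

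The second step is purity. By \cite[A.19]{FresanEtAlHodgetheory22}, $\mathrm{H}^i(U,f)$ has weights $\ge i$ and $\mathrm{H}^i_c(U,f)$ has weights $\le i$. Because the forget-supports map is a morphism of EMHS and strictly compatible with weights, an isomorphism between them forces both to be pure of weight $i$. Hence the whole content is the first step.

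The main obstacle is the local analysis at points of $Z(s_0)\cap D_{j_1}\cap\dots\cap D_{j_k}$ with multiplicities $m_j$. There $f=u\,x_0/(x_1^{m_1}\cdots x_k^{m_k})$, where $x_0$ is a coordinate because of transversality. I would first attempt a direct computation: on each stratum, show that $\psi$ and $\phi$ of $f_*\bb{Q}$ versus $f_!\bb{Q}$ differ only by pieces on which $f$ is a coordinate on an $\bb{A}^1$-factor. Such pieces have class zero by relation (iii) and vanishing $\Pi$-image, as in the proof of \cref{prop::motivic-measure}. If this gets messy, the fallback is to invoke the known isomorphism $\rr{H}_{\rr{dR},c}(U,f)\simeq\rr{H}_{\rr{dR}}(U,f)$ for non-degenerate $f$ from Mochizuki's framework. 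One would then apply it to the de Rham fibres, and likewise to all rescalings $f\mapsto\lambda f$ so that the fibres of the whole Fourier transform agree.
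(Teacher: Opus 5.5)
The paper gives no argument for this proposition; it imports it from \cite[Prop.~5.2.1]{QIn26}. Your proposal is therefore a genuinely different, direct route, and its architecture is correct. The weight step is airtight as written: if the forget-supports map $\rr{H}^i_c(U,f)\to\rr{H}^i(U,f)$ is an isomorphism of EMHS, then the bounds of \cite[A.19]{FresanEtAlHodgetheory22} force both sides to be pure of weight $i$. These bounds are weights $\le i$ on the source and $\ge i$ on the target, since $U$ is smooth.

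The isomorphism comes from the cone of $f_!\bb{Q}^{\rr{H}}_U\to f_*\bb{Q}^{\rr{H}}_U$ over $\bb{A}^1$. Resolve $f$ to $\bar f\colon\tilde X\to\bb{P}^1$ by blow-ups centred in $|D|$. Then the cone is $\bar f_*$ of $i^*j_*\bb{Q}^{\rr{H}}_U$ on the boundary $B$ of $U$ over $\bb{A}^1$. If that is an iterated extension of objects $M\boxtimes\bb{Q}^{\rr{H}}_{\bb{A}^1}$, the exact functor $\Pi$ kills it.

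Compared with the citation, your route is self-contained. It reuses exactly the geometry of \cref{lem:crepant-blow-up} and \cref{prop:crepant-blow-up}, now at the level of sheaves rather than motivic classes. It also exhibits $\rr{H}^i_c(U,f)\cong\rr{H}^i_c(\tilde U,\tilde f)$ with $\tilde f$ proper on $\tilde U=\tilde X\setminus|D_{\tilde X}|$, which is the conceptual reason non-degenerate pairs behave like smooth projective varieties. The citation buys brevity.

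What you must tighten is the product structure of the strata of $B$, which is the entire content of Step 1. A local form $f=t\cdot u$ with $u$ a unit does not give it. If $u|_S$ depends on $t$, then $f|_S$ can have critical points with $t\neq0$, and the cone would be singular at finite values.

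The input you need is the chart formula in \cref{lem:crepant-blow-up}. On a pole-order-zero exceptional divisor one has $\tilde f|_{E^\circ}=a\,v$, where $a=u(0,0,z_j,z_k)/\prod_{j\in I(p)}z_j^{n_j}$ is a unit on $Z^\circ$ independent of the fibre coordinate $v$. Hence $(\pi,\tilde f)\colon E^\circ\to Z^\circ\times\bb{A}^1$ is an isomorphism.

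For $m_j>1$, a single blow-up of $Z(s_0)\cap Z(\sigma_j)$ does not resolve $f$, so you need the iterated procedure of \cref{prop:crepant-blow-up}. There you should check three things:
\begin{enumerate}
\item A blow-up creates a divisor outside the pole divisor exactly when the component involved has multiplicity one.
\item Later centres lie in the pole divisor, so they leave earlier $E^\circ$ untouched.
\item A later centre may meet an earlier pole-order-zero divisor; it does so transversally, because the configuration stays simple normal crossing. The intersection over $\bb{A}^1$ is then the restriction of the new $\bb{A}^1$-bundle, again a product with $\tilde f$ the projection.
\end{enumerate}
Equivalently, $\bar f|_B$ is proper over $\bb{A}^1$ and submersive on every stratum. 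So the cone has locally constant cohomology, and $\Pi$ kills it because $\rr{rat}$ is faithful. With this in place, nearby and vanishing cycles at $\infty$ play no role.

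Finally, your fallback works only if the quoted isomorphism $\rr{H}_{\rr{dR},c}(U,f)\simeq\rr{H}_{\rr{dR}}(U,f)$ is the forget-supports map itself. Since $\Xi_{\rr{dR}}$ is exact and faithful, it reflects isomorphisms, so the rescaling argument is unnecessary.
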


\subsubsection{The irregular Hodge polynomial}
Let $k$ be $\bb{C}$. The de Rham fiber $\Xi_{\rr{dR}}M$ of an object $M$ in $\mathrm{EMHS}$ carries an \textit{irregular Hodge filtration} $F_{\rr{irr}}^\bullet \Xi_{\rr{dR}}M$, constructed using a
generalization of Deligne's filtration \cite[\S6]{SabbahFourierLaplace10}; see also   \cite[Prop. 2.61 \& Def. 3.2]{SabbahIrregularHodge18}. 

For exponential mixed Hodge structures coming from geometry, the de Rham fibers of $\mathrm{H}^i_{?}(X,g)$ are isomorphic to $\mathrm{H}^i_{\mathrm{dR},?}(X,g)$ for $?\in \{\emptyset,c\}$. In this case, Esnault, Sabbah, and Yu showed in \cite[\S1]{EsnaultEtAl$E_1$degenerationirregular17} that the irregular Hodge filtration on the de~Rham fiber coincides with the Kontsevich filtration and the Yu filtration \cite{YuIrregularHodge14} on the twisted de Rham cohomologies $\mathrm{H}^i_{\mathrm{dR},?}(X,g)$.

As an analogue of the classical Hodge polynomial, we define the \textit{irregular Hodge polynomial} of $M$ as
\begin{equation}\label{eq:irr-HP}
	\rr{IrrHP}(M):=\sum_{k}(-1)^k\sum_{\alpha\in \bb{Q} }\dim  \mathrm{gr}_{F^\bullet_{irr}}^\alpha\Xi_{\rr{dR}} (\rr{gr}^W_k M)s^kt^{\alpha}\in \bb{Z}[s^{\pm 1},t^{\frac{1}{n}},n\in\bb{Z}\backslash\{0\}].
\end{equation}

\begin{lem}\label{lem:irrhp-ring}
	$\rr{IrrHP}$ is a ring homomorphism.
\end{lem}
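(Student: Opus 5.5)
The plan is to show that $\rr{IrrHP}$, viewed as a map $\rr{K}_0(\rr{EMHS})\to \bb{Z}[s^{\pm1},t^{\frac1n}]$, is well defined, additive, and multiplicative with respect to the convolution product $\star$. Composing with $\mu$ from \cref{prop::motivic-measure} then gives a motivic measure on $\KEXP$.

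\textbf{Additivity.} The functors involved are all exact and strict:
\begin{itemize}
\item morphisms in $\rr{EMHS}$ are strict for $W^{\rr{EMHS}}_\bullet$, because $W^{\rr{EMHS}}=\Pi W^{\rr{MHM}}$, $\Pi$ is exact, and morphisms of mixed Hodge modules are strict for $W$;
\item $\Xi_{\rr{dR}}$ is exact by \eqref{eq:de_rham_fiber};
\item the irregular Hodge filtration is strictly compatible with morphisms, by Sabbah's theory, i.e.\ the functor $M\mapsto(\Xi_{\rr{dR}}M,F^\bullet_{\rr{irr}})$ is exact to filtered vector spaces.
\end{itemize}
Consequently $M\mapsto \dim \mathrm{gr}^\alpha_{F_{\rr{irr}}}\Xi_{\rr{dR}}\mathrm{gr}^W_k M$ is additive in short exact sequences. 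Hence $\rr{IrrHP}$ factors through $\rr{K}_0(\rr{EMHS})$ as a group homomorphism.

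\textbf{Multiplicativity.} By additivity and the fact that $\star$ is exact on $\rr{EMHS}$, we may replace $M$ and $N$ by their weight gradeds. So it suffices to treat $M$ pure of weight $a$ and $N$ pure of weight $b$. We then need three facts:
\begin{itemize}
\item $M\star N$ is pure of weight $a+b$ in $\rr{EMHS}$;
\item $\Xi_{\rr{dR}}(M\star N)\cong \Xi_{\rr{dR}}M\otimes\Xi_{\rr{dR}}N$;
\item under this isomorphism $F^\bullet_{\rr{irr}}$ is the tensor product filtration, so that $\mathrm{gr}^\gamma=\bigoplus_{\alpha+\beta=\gamma}\mathrm{gr}^\alpha\otimes\mathrm{gr}^\beta$.
\end{itemize}
Given these, the sign $(-1)^{a+b}$ and the monomial $s^{a+b}t^{\alpha+\beta}$ match the product of the two polynomials.

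The second fact is Thom--Sebastiani for the Fourier transform: the Fourier transform turns $\star$ into tensor product, and fibers at $1$ of a tensor product are tensor products of fibers. For weights, I would use that $\boxtimes$ is additive in weights and that $\mathrm{sum}_*$ has cone over $\mathrm{sum}_!$ killed by $\Pi$, exactly as in the proof of \cref{prop::motivic-measure}(iv). This gives that $\Pi(M\star_! N)=\Pi(M\star N)$ has weights both $\le a+b$ and $\ge a+b$.

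\textbf{Main obstacle.} The hardest step is the compatibility of irregular Hodge filtrations with Thom--Sebastiani. I would first try to reduce to geometric objects. By \cite[\S1]{EsnaultEtAl$E_1$degenerationirregular17}, $F_{\rr{irr}}$ agrees with Yu's filtration on twisted de Rham cohomology. For $(U\times V,f\boxplus g)$, the Künneth isomorphism of twisted log de Rham complexes on good compactifications is filtered, since Yu's filtered complex on a product is the tensor product of the filtered complexes. Strictness then yields the tensor filtration on cohomology. For general objects of $\rr{EMHS}$, I would instead invoke the known Thom--Sebastiani property of the irregular Hodge filtration (Sabbah/Chen--Yu). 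Either version suffices: motivic measures are only evaluated on classes $\mu([U,f])$, which are geometric.
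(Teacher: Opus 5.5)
Your proof follows the paper at the decisive point. Everything hinges on the compatibility of $F^\bullet_{\mathrm{irr}}$ with the weight filtration and with tensor products. The paper obtains this by regarding an object of $\mathrm{EMHS}$ as an irregular mixed Hodge structure (Sabbah, Prop.~3.54) and quoting Sabbah's Prop.~3.18. Your additivity step and your reduction to pure objects are correct; they spell out what the paper leaves implicit. Two of your side arguments, however, do not work as written.

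\emph{Purity of $M\star N$.} You apply weight statements about $\boxtimes$, $\mathrm{sum}_!$ and $\mathrm{sum}_*$ to $M$ and $N$. Those statements concern $\mathrm{MHM}$-weights, but $M$ and $N$ are only pure for $W^{\mathrm{EMHS}}=\Pi W^{\mathrm{MHM}}$, and the two notions differ. For example, the unit object $\Pi(\delta_0)=j_!\mathbb{Q}^{\mathrm{H}}_{\mathbb{G}_m}$ is pure of weight $0$ in $\mathrm{EMHS}$ but has $\mathrm{MHM}$-weights $0$ and $1$. So the bounds you derive do not give purity. The repair has two parts:
\begin{enumerate}
\item Replace $M$ by $\mathrm{gr}^{W^{\mathrm{MHM}}}_a M$. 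This is pure as a mixed Hodge module and satisfies $\Pi(\mathrm{gr}^{W^{\mathrm{MHM}}}_a M)\cong M$; do the same for $N$.
\item Check that $\Pi$ turns $\mathrm{MHM}$-weight bounds into $\mathrm{EMHS}$-weight bounds. This follows from strictness of the natural map $P\to\Pi P$, whose kernel and cokernel are constant and hence killed by $\Pi$.
\end{enumerate}
Alternatively, take weight compatibility directly from Sabbah, as the paper does.

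\emph{The geometric fallback.} This route is not sufficient, for three reasons.
\begin{enumerate}
\item At best it gives multiplicativity of $\mathrm{IrrHP}\circ\mu$, not the lemma on all of $\rr{K}_0(\mathrm{EMHS})$.
\item After your reduction to pure pieces you are no longer working with twisted de Rham cohomologies, so Yu's filtration says nothing directly about $\mathrm{gr}^W_k\mathrm{H}^i_c(U,f)$. You would need a K\"unneth isomorphism compatible with $W$ and $F_{\mathrm{irr}}$ simultaneously.
\item The product of good compactifications of $U$ and $V$ is not a good compactification of $(U\times V,f\boxplus g)$, since $f\boxplus g$ is indeterminate over the product of the pole divisors of $f$ and $g$. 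A filtered K\"unneth for Yu's complexes is therefore not formal; it is the irregular Thom--Sebastiani theorem itself.
\end{enumerate}
The citation route is what actually closes the argument, as in the paper.
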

\begin{proof}
	It suffices to show that $\rr{IrrHP}(M\otimes N)=\rr{IrrHP}(M)\cdot \rr{IrrHP}(N)$. One can use \cite[Prop. 3.54]{SabbahIrregularHodge18} to identify an exponential mixed Hodge structure as an irregular mixed Hodge structure, and then use the compatibility of the irregular Hodge filtration with the weight filtration and tensor product \cite[Prop.3.18]{SabbahIrregularHodge18} to conclude.
\end{proof}

Hence, combining with the motivic measure 
	$$\mu\colon \KEXP\to \rr{K}_0(\rr{EMHS}),$$
one deduces a motivic measure 
	$$\rr{IrrHP}\colon \KEXP\to\bb{Z}[s^{\pm 1},t^{\frac{1}{n}},n\in\bb{Z}\backslash\{0\}].$$

\begin{rmk}
	For a Landau--Ginzburg model $(U,f)$, the irregular Hodge polynomial $\rr{IrrHP}(U,f)$ lies in $\bb{Z}[s^{\pm 1},t^{1/N}]$ for some $N$ depending on $(U,f)$. More precisely, take a good compactification $X$ of $U$ such that $X\setminus U$ is a simple normal crossing divisor, the pole divisor $P$ of $f$ is supported on $X\setminus U$, and $f$ extends to a morphism $X\to \bb{P}^1$. Then $N$ is the least common multiple of the multiplicities of the irreducible components of $P$.
\end{rmk}

After inverting the irregular Hodge polynomials of $\{\mathbb{L},\mathbb{L}^n-1\mid n\geq 1\}$, we obtain the following morphism.

\begin{cor}\label{cor::motivic-measure}
The irregular Hodge polynomial induces a morphism 
	$$\rr{IrrHP}\colon \mathcal{E}xp\mathcal{M}_k\to \bb{Z}[s^{\pm 1},t^{\pm \frac{1}{n}},((s^2t)^n-1)\inv, n \geq 1].$$
\end{cor}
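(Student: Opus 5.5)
The plan is to factor the desired map through the previously constructed ring homomorphism. \cref{prop::motivic-measure} gives $\mu\colon \KEXP\to \rr{K}_0(\rr{EMHS})$, and \cref{lem:irrhp-ring} shows $\rr{IrrHP}\colon \rr{K}_0(\rr{EMHS})\to \bb{Z}[s^{\pm1},t^{1/n}]$ is a ring homomorphism. Their composite is a ring homomorphism on $\KEXP$. By the universal property of localization, it extends uniquely to $\mathcal{E}xp\mathcal{M}_k=\KEXP[S\inv]$ as soon as the images of $\mathbb{L}$ and of $\mathbb{L}^n-1$ are invertible in the target ring. The target ring is obtained from $\bb{Z}[s^{\pm1},t^{1/n}]$ by inverting $t^{1/n}$ and the elements $(s^2t)^n-1$.

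So the key step is computing $\rr{IrrHP}(\mu(\mathbb{L}))$, with $\mathbb{L}=[\bb{A}^1,0]$.

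1. The compactly supported cohomology of $(\bb{A}^1,0)$ is concentrated in degree $2$. There it is $\Pi$ applied to the skyscraper at $0$ carrying $\bb{Q}(-1)$.
2. Under $\Pi$ this gives the EMHS corresponding to the classical Hodge structure $\bb{Q}(-1)$ (convolution of the delta at $0$ with $j_!\bb{Q}$). It is pure of weight $2$.
3. Its de Rham fiber is one-dimensional, with irregular Hodge filtration jumping at $\alpha=1$. This is because the irregular Hodge filtration extends the usual Hodge filtration on classical Hodge structures, by \cite[Prop.~3.54]{SabbahIrregularHodge18}.
4. Hence $\rr{IrrHP}(\mathbb{L})=(-1)^2 s^2t=s^2t$.
5. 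By multiplicativity, $\rr{IrrHP}(\mathbb{L}^n-1)=(s^2t)^n-1$.

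Both $s^2t$ and $(s^2t)^n-1$ are units in the target ring, since $s$ and $t$ are inverted there. So the universal property yields the morphism of \cref{cor::motivic-measure}.

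The main obstacle is the bookkeeping of the weight and Hodge normalizations. Specifically, one must check that the weight of $\mathrm{H}^2_c(\bb{A}^1,0)$ in $W^{\rr{EMHS}}$ is indeed $2$, and that the irregular jump is at $\alpha=1$ rather than $-1$, so that the image is exactly $s^2t$. If a different sign convention appeared, the image would be $s^{2}t^{-1}$ or similar. That element is still invertible, but it would require adjusting the localized elements, so this computation should be done carefully against \eqref{eq:irr-HP}. Everything else is formal.
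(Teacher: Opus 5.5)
Your proposal is correct and takes the same route as the paper, whose argument is the one-line remark that $\mathrm{IrrHP}\circ\mu$ extends to the localization once the images of $\mathbb{L}$ and $\mathbb{L}^n-1$ are inverted. Your explicit computation $\mathrm{IrrHP}(\mathbb{L})=s^2t$, from $\mathrm{H}^2_c(\mathbb{A}^1,0)$ being the exponential realization of $\mathbb{Q}(-1)$ (pure of weight $2$, Hodge jump at $\alpha=1$), fills in a step the paper leaves implicit in the shape of the target ring; the appeal to the universal property of localization is then exactly what is needed.
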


\begin{cor}\label{cor::same-Hodge-numbers}
	If $[U,f]_{\mathbb{C}}=[V,g]_{\mathbb{C}}$ in $\mathcal{E}xp\mathcal{M}_{\mathbb{C}}$, and all the associated exponential mixed Hodge structures $\mathrm{H}^i_c(U,f)$ and $\mathrm{H}^i_c(V,g)$ are pure of weight $i$, then $(U,f)$ and $(V,g)$ have the same irregular Hodge numbers.
\end{cor}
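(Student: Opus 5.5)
The plan is to apply the motivic measure $\rr{IrrHP}$ of \cref{cor::motivic-measure} to the equality $[U,f]_{\bb{C}}=[V,g]_{\bb{C}}$. This gives $\rr{IrrHP}(U,f)=\rr{IrrHP}(V,g)$ in the target ring
\[
\bb{Z}[s^{\pm 1},t^{\pm \frac{1}{n}},((s^2t)^n-1)\inv, n \geq 1].
\]
By construction, $\rr{IrrHP}([U,f]_{\bb{C}})$ is the image of $\mu([U,f]_k)=\sum_i(-1)^i[\mathrm{H}^i_c(U,f)]$ under the polynomial \eqref{eq:irr-HP}. One point needs checking first: the natural map from $\KEXP$ to the localization, followed by $\rr{IrrHP}$, agrees with $\rr{IrrHP}$ on $\KEXP$. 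Indeed, the images of $\mathbb{L}$ and $\mathbb{L}^n-1$ are $s^2t$ and $(s^2t)^n-1$, which are non-zero-divisors in the polynomial-type ring $\bb{Z}[s^{\pm1},t^{\pm 1/n}]$. So the localization map on the target is injective, and the equality already holds in $\bb{Z}[s^{\pm1},t^{\pm1/n}]$. Note that equality in $\mathcal{E}xp\mathcal{M}_{\bb{C}}$ does not imply equality in $\KEXP$, but this does not matter, since we only compare images.

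The second step uses purity to separate the cohomological degrees. Since $\mathrm{H}^i_c(U,f)$ is pure of weight $i$, we have $\rr{gr}^W_k\mathrm{H}^i_c(U,f)=0$ for $k\neq i$. Its contribution to \eqref{eq:irr-HP} is therefore
\[
(-1)^i s^i\sum_\alpha \dim\rr{gr}^\alpha_{F_{\rr{irr}}}\Xi_{\rr{dR}}\mathrm{H}^i_c(U,f)\,t^\alpha,
\]
with the sign factors combining consistently. Different $i$ give different powers of $s$, so no cancellation occurs between degrees. Comparing coefficients of $s^it^\alpha$ on both sides yields
\[
\dim\rr{gr}^\alpha_{F_{\rr{irr}}}\mathrm{H}^i_{\rr{dR},c}(U,f)=\dim\rr{gr}^\alpha_{F_{\rr{irr}}}\mathrm{H}^i_{\rr{dR},c}(V,g)
\]
for all $i$ and $\alpha$. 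Here the de Rham fiber of $\mathrm{H}^i_c$ is identified with $\mathrm{H}^i_{\rr{dR},c}$, carrying the irregular Hodge filtration by \cite{EsnaultEtAl$E_1$degenerationirregular17}.

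Finally, to pass from compactly supported to ordinary twisted de Rham cohomology, I would note that in the intended application \cref{prop:purity-EMHS} gives $\mathrm{H}^i\simeq\mathrm{H}^i_c$. In general, one uses duality: $\mathrm{H}^i(U,f)$ is dual, up to a Tate twist, to $\mathrm{H}^{2d-i}_c(U,-f)$. Alternatively, one interprets the irregular Hodge numbers of $(U,f)$ as those of the compactly supported theory. The main subtlety I expect is bookkeeping rather than substance. One must confirm that $W^{\rr{EMHS}}$ purity really kills all off-diagonal graded pieces, and that the convention in \eqref{eq:irr-HP} (sign $(-1)^k$ versus $(-1)^i$) leaves each coefficient of $s^it^\alpha$ as a single signed dimension, so the comparison is legitimate.
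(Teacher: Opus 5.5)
Your argument is correct and is essentially the paper's proof. The injectivity of the localization on the target (the images of $\mathbb{L}$ and $\mathbb{L}^n-1$ are nonzero in the domain $\mathbb{Z}[s^{\pm1},t^{\pm1/N}]$) is the same step as the paper's $\mathrm{IrrHP}(q)\cdot(\mathrm{IrrHP}(U,f)-\mathrm{IrrHP}(V,g))=0$ with $\mathrm{IrrHP}(q)\neq0$, and purity then lets you read off $h^\alpha$ as the coefficient of $s^it^\alpha$, exactly as the paper does.

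One caution: your ``in general'' duality route does not work as stated, because it tacitly needs $\dim U=\dim V$. For example, $[\mathbb{A}^1,0]_{\mathbb{C}}=[\mathbb{A}^2,xy]_{\mathbb{C}}$, and both have $\mathrm{H}^\bullet_c$ equal to $\mathbb{Q}(-1)$ in degree $2$, yet their ordinary twisted cohomologies sit in degrees $0$ and $2$. So the conclusion should be read for the compactly supported numbers, or for the ordinary ones when $\mathrm{H}^i\simeq\mathrm{H}^i_c$, as \cref{prop:purity-EMHS} supplies in the application.
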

\begin{proof}
	By the definition of localization, there exists $q=\mathbb{L}^a \cdot \prod_{i=1}^k (\mathbb{L}^{b_i}-1)\in S$ such that $q([U,f]_{\mathbb{C}}-[V,g]_{\mathbb{C}})=0$ in $\KEXP$. In particular, the irregular Hodge polynomial of $q$ is nonzero. By \cref{cor::motivic-measure}, we have 
	$$\mathrm{IrrHP}(q)\cdot (\mathrm{IrrHP}(U,f)-\mathrm{IrrHP}(V,g))=0,$$
	where  $\mathrm{IrrHP}(q),\mathrm{IrrHP}(U,f) $ and $\mathrm{IrrHP}(V,g)$ belong to the integral ring $\mathbb{Z}[s^{\pm 1},t^{\pm \frac{1}{N}}]$ for some $N\geq 1$. Hence 
	$\mathrm{IrrHP}(U,f)=\mathrm{IrrHP}(V,g)$.

	As the associated exponential mixed Hodge structures $\mathrm{H}^i_c(U,f)$ and $\mathrm{H}^i_c(V,g)$ are pure of weight $i$, one can recover the irregular Hodge numbers from the irregular Hodge polynomial. More precisely, the irregular Hodge number $h^\alpha(U,f,i)$ is the coefficient of $s^it^\alpha$. So $(U,f)$ and $(V,g)$ have the same irregular Hodge numbers.
\end{proof}

\subsection{Motivic integration}

Let $X$ be a smooth variety over $k$. For each integer $n\geq 0$, the $n$-th jet scheme $\scr{L}_n(X)$ represents the functor
$$
	\scr{L}_n(X)(Y)=\Hom_k(Y\times_k k[t]/(t^{n+1}),X)
$$
on $k$-schemes $Y$. The arc scheme $\scr{L}(X)=\varprojlim_n\scr{L}_n(X)$ represents
$$
	\scr{L}(X)(Y)=\Hom_k(Y\times_k k[[t]],X).
$$
For $m\geq n$, the truncation morphism $\theta_{n,X}^m\colon\scr{L}_m(X)\to\scr{L}_n(X)$ is induced by the natural surjection $k[t]/(t^{m+1})\to k[t]/(t^{n+1})$. Pullback along truncation gives maps
$$
	(\theta_{n,X}^m)^*\colon\mathcal{E}xp\mathcal{M}_{\scr{L}_n(X)}\to\mathcal{E}xp\mathcal{M}_{\scr{L}_m(X)}.
$$

A \textit{motivic residual function} on $X$ is an element of the direct limit
$$
	\varinjlim_m\mathcal{E}xp\mathcal{M}_{\scr{L}_m(X)}.
$$
Concretely, if a motivic residual function $h$ is represented by a pair $[H,f]_{\scr{L}_m(X)}$, where $H$ is a scheme over $\scr{L}_m(X)$ for some $m$ and $f\colon H\to\bb{A}^1$ is a regular function, its motivic integral over $\scr{L}(X)$ is defined by
\begin{equation}\label{eq:motivic-integral}
	\int_{\scr{L}(X)}h\,\mathrm{d}\mu_X
	:=\bb{L}^{-\dim X\,(m+1)}\cdot[H,f]_k\in\mathcal{E}xp\mathcal{M}_k,
\end{equation}
where $[H,f]_k$ denotes the class of the pair $[H,f]$ in $\mathcal{E}xp\mathcal{M}_k$ obtained by forgetting the $\scr{L}_m(X)$-structure.

We end this subsection by recalling the change of variables formula \cite[Thm.\,4.2.1]{CluckersLoeserConstructibleexponential10} in a form that we need. Following the notation of \cite[\S~4.2]{CluckersLoeserConstructibleexponential10}, $\mathrm{ordjac}\,\varphi$ denotes the order of the Jacobian of $\varphi$. We write $\mathrm{ord}_E$ for the corresponding order function associated with a divisor $E$. For a birational morphism between smooth varieties, $\mathrm{ordjac}\,\varphi=\mathrm{ord}_{K_{Y/X}}$.

\begin{thm}[Change of variables]
	Let $\varphi\colon Y\to X$ be a proper birational morphism between smooth varieties over $k$, and also denote by $\varphi\colon\scr{L}(Y)\to\scr{L}(X)$ the induced morphism on arc schemes. Then for any motivic residual function $h\in\mathcal{E}xp\mathcal{M}_{\scr{L}(X)}$,
	\begin{equation*}
	\int_{\scr{L}(X)} h \, \mathrm{d}\mu_X = \int_{\scr{L}(Y)} \varphi^*h\cdot \bb{L}^{-\mathrm{ordjac}\,\varphi} \, \mathrm{d}\mu_Y.
	\end{equation*}
\end{thm}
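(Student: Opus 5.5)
The plan is to follow the Denef--Loeser proof of the classical change of variables formula and check that the exponential data rides along unchanged. The key observation is that the potential of a motivic residual function $h=[H,f]_{\scr{L}_m(X)}$ is a function on $H$ itself, not on the base, and $\varphi^*h=[H\times_{\scr{L}_m(X)}\scr{L}_m(Y),f\circ\pr_1]$. So every fibration statement about the underlying varieties transports to classes with potentials, as long as the potential is constant along the fibres. Everything is first proved at the level of truncations and then summed.

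\textbf{Step 1 (stratify by the Jacobian order).} For $e\ge 0$ let $\Delta_e\subset\scr{L}(Y)$ be the cylinder of arcs $\gamma$ with $\mathrm{ordjac}\,\varphi(\gamma)=\mathrm{ord}_{K_{Y/X}}(\gamma)=e$. The complement of $\bigcup_e\Delta_e$ is the set of arcs lying in the exceptional locus. Since $\varphi$ is proper and birational, $\varphi$ restricts to a bijection from $\bigcup_e\Delta_e$ onto $\scr{L}(X)$ minus a set of arcs contained in a proper closed subset. Both excluded sets have measure zero, so they contribute nothing. On $\Delta_e$ the integrand $\varphi^*h\cdot\bb{L}^{-\mathrm{ordjac}\,\varphi}$ equals $\bb{L}^{-e}\varphi^*h$. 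The formula therefore reduces to showing, for each $e$,
\begin{equation*}
\int_{\varphi(\Delta_e)}h\,\mathrm{d}\mu_X=\bb{L}^{-e}\int_{\Delta_e}\varphi^*h\,\mathrm{d}\mu_Y,
\end{equation*}
and then summing over $e$. For a fixed $h$ of level $m$, $\mathrm{ord}_{K_{Y/X}}$ is bounded on any cylinder of finite level, up to a measure-zero set. So the sum is finite modulo measure-zero contributions, or converges in the completion, which is where Cluckers--Loeser work.

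\textbf{Step 2 (the fibration lemma).} For $n\ge\max(2e,m)$, I will invoke the Denef--Loeser lemma. The map $\theta_n(\Delta_e)\to\theta_n(\varphi(\Delta_e))$ induced by $\varphi_n\colon\scr{L}_n(Y)\to\scr{L}_n(X)$ is a piecewise trivial fibration with fibre $\bb{A}^e$. Moreover, $\Delta_e$ and $\varphi(\Delta_e)$ are cylinders whose truncations are piecewise trivial fibrations with fibres $\bb{A}^{(n'-n)\dim}$ over their level-$n$ images. Pull $h$ back to level $n$ via $(\theta^n_m)^*$; this is allowed because motivic residual functions live in the direct limit. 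The restriction of $\varphi^*h$ to $\theta_n(\Delta_e)$ is then the base change of $h|_{\theta_n(\varphi(\Delta_e))}$ along a piecewise trivial $\bb{A}^e$-fibration. Its total class is therefore $\bb{L}^e$ times the class of $h$ restricted to $\theta_n(\varphi(\Delta_e))$. This uses relation (ii) to cut into pieces, relation (i) to identify each piece with a product, and multiplicativity for $[V\times\bb{A}^e,f\circ\pr_1]=[V,f]\cdot\bb{L}^e$. Normalizing by $\bb{L}^{-\dim X(n+1)}$ as in \eqref{eq:motivic-integral} gives the identity in Step~1.

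\textbf{Main obstacle.} The main obstacle is Step~2, namely the fibration lemma itself. One must show that for $n\ge 2e$ the fibre of $\varphi_n$ over a point of $\theta_n(\varphi(\Delta_e))$ is an affine space of dimension exactly $e$, and that this holds piecewise trivially in families. I would prove it by Denef--Loeser's Taylor expansion argument. Locally write $\varphi(\gamma+t^{n-e+1}u)=\varphi(\gamma)+t^{n-e+1}J_\varphi(\gamma)u+O(t^{2(n-e+1)})$, and put $J_\varphi(\gamma)$ in Smith normal form with $\mathrm{ord}\det=e$. This identifies the fibre with an affine space modelled on the cokernel of $J_\varphi(\gamma)$ modulo $t^{e}$, which has length $e$; it also shows that arcs of order $e$ lift. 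Because the potential is constant on these fibres, no new exponential-sum cancellation can appear. The exponential twist is purely formal here, and the content is entirely the classical geometric statement.
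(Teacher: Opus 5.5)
Your per-stratum argument is sound. For fixed $e$ and $n\ge\max(2e,m)$, the Denef--Loeser fibration lemma gives $\int_{\varphi(\Delta_e)}h\,\mathrm{d}\mu_X=\bb{L}^{-e}\int_{\Delta_e}\varphi^*h\,\mathrm{d}\mu_Y$ in $\mathcal{E}xp\mathcal{M}_k$. (The fibre is a torsor under the kernel of $J_\varphi(\gamma)$ on $(k[t]/t^e)^{\dim X}$, not the cokernel, but both have length $e$.) You are also right that the potential is pulled back from $H$, so it is constant along the fibres.

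The gap is in assembling these identities over $e$. Your claim that $\mathrm{ord}_{K_{Y/X}}$ is bounded up to measure zero on the relevant cylinders is false. Take $h=1$ and $\varphi$ not an isomorphism: the right-hand integrand lives on all of $\scr{L}(Y)$, and $\{\mathrm{ord}_{K_{Y/X}}\ge N\}$ is a nonempty cylinder of nonzero volume for every $N$. So the right-hand side is a genuinely infinite sum $\sum_{e\ge 0}\bb{L}^{-e}\int_{\Delta_e}\varphi^*h\,\mathrm{d}\mu_Y$.

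The left side has the same problem. The identity $\int_{\scr{L}(X)}h\,\mathrm{d}\mu_X=\sum_e\int_{\varphi(\Delta_e)}h\,\mathrm{d}\mu_X$ is a $\sigma$-additivity statement. For each finite $N$ the error term is the integral of $h$ over the cylinder $\scr{L}(X)\setminus\bigcup_{e\le N}\varphi(\Delta_e)$, which is not zero, only of very negative dimension. In the localization $\mathcal{E}xp\mathcal{M}_k$, where \eqref{eq:motivic-integral} takes values, neither infinite sums nor discarding measure-zero sets have any meaning.

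What your argument actually proves is the identity in the completion of $\mathcal{E}xp\mathcal{M}_k$ for the dimension filtration. Injectivity of the natural map from $\mathcal{E}xp\mathcal{M}_k$ to that completion is not available. So this is weaker than the statement, and weaker than what the paper uses: the proof of \cref{cor::same-Hodge-numbers} clears a denominator $q\in S$ in an equality in $\mathcal{E}xp\mathcal{M}_{\bb C}$.

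It is also not accurate that Cluckers--Loeser work in a completion; avoiding it is the point of their formalism. The paper does not prove the theorem itself; it quotes their Theorem 4.2.1. In their framework, integration over a $\mathbb{Z}$-valued variable such as $e$ is defined directly for Presburger-constructible data. Sums of the shape $\sum_e\bb{L}^{-ae}(\cdots)$ receive rational values with denominators $1-\bb{L}^{-i}$, which is exactly why $\bb{L}^n-1$ is inverted in $\mathcal{E}xp\mathcal{M}_k$. Well-definedness, additivity and Fubini for this summation are theorems in that setting.

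To complete your route you have two options:
\begin{enumerate}
\item Import that summation theory, and check that your decomposition over $e$ and the disposal of the null sets are compatible with it.
\item Settle for the completed statement, and separately prove that $\mathrm{IrrHP}$ extends continuously to the completion, so that the later applications in the paper still go through.
\end{enumerate}
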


\section{Invariance of irregular Hodge numbers}\label{sec:proof}
In this section, we prove \cref{thm::main} and apply it to mutations of Laurent polynomials.
\subsection{Proof of \cref{thm::main}}

\begin{prop}\label{prop:crepant-mor}
	Let $(X,D_X,f)$ and $(Y,D_Y,g)$ be crepant pairs with non-degenerate functions such that $f$ and $g$ extend to proper morphisms $X\to\bb{P}^1_{\bb{C}}$ and $Y\to\bb{P}^1_{\bb{C}}$, respectively. Then $[U,f]_{\bb{C}}=[V,g]_{\bb{C}}$ in $\mathcal{E}xp\mathcal{M}_{\bb{C}}$, where $U=X\setminus|D_X|$ and $V=Y\setminus|D_Y|$.
\end{prop}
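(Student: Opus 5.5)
We follow Kontsevich's strategy. Choose a common smooth projective model $Z$ with $\varphi\colon Z\to X$, $\psi\colon Z\to Y$ as in \cref{defn:crepant}. Since $f$ and $g$ extend to morphisms $\bar f\colon X\to\bb{P}^1$, $\bar g\colon Y\to\bb{P}^1$, the rational maps $\bar f\circ\varphi$ and $\bar g\circ\psi$ are morphisms $Z\to\bb{P}^1$ that agree on a dense open set. Hence they agree everywhere, giving a single morphism $h\colon Z\to\bb{P}^1$. As in \cref{rmk:proper-crepant-k-equivalence}, $\varphi^*D_X=h^*(\infty)=\psi^*D_Y$. Consequently
$$W:=\varphi^{-1}(U)=Z\setminus h^{-1}(\infty)=\psi^{-1}(V),$$
and $\varphi^*K_X=\psi^*K_Y$, so $K_{Z/X}=K_{Z/Y}$. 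This is the key simplification: in the proper case the open Landau--Ginzburg spaces pull back to the \emph{same} open subset of $Z$. The difficulty of Step~2 in the outline does not arise here.

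Next we express $[U,f]_{\bb C}$ as a motivic integral. On $X$ we consider the motivic residual function $\mathbf 1_U\cdot e^{f}$. Concretely, it is represented at level $m=0$ by the class $[U,f\circ\theta]_{\scr{L}_0(X)}$, where $U\subset X=\scr{L}_0(X)$ and $\theta$ is the constant-term map. By \eqref{eq:motivic-integral},
$$\int_{\scr{L}(X)}\mathbf 1_U e^{f}\,\mathrm d\mu_X=\bb{L}^{-\dim X}[U,f]_{\bb C},$$
using that the function only depends on the base point of the arc. The same holds on $Y$. Since $\dim X=\dim Y=\dim Z$, it suffices to compare the two integrals.

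Now apply the change of variables formula to $\varphi$ and to $\psi$. The pullback $\varphi^*(\mathbf 1_U e^{f})$ is $\mathbf 1_{\scr{L}(Z)\cap\theta^{-1}(W)}e^{h\circ\theta}$. It only depends on the origin of the arc, through $W$ and $h|_W$, and the same holds for $\psi^*(\mathbf 1_V e^{g})$. The two pullbacks are therefore literally equal as motivic residual functions on $Z$. The Jacobian factors are $\bb{L}^{-\mathrm{ord}_{K_{Z/X}}}$ and $\bb{L}^{-\mathrm{ord}_{K_{Z/Y}}}$, which coincide by crepancy. Hence
$$\int_{\scr{L}(X)}\mathbf 1_Ue^f\,\mathrm d\mu_X=\int_{\scr{L}(Z)}\mathbf 1_W e^{h}\bb{L}^{-\mathrm{ord}_{K_{Z/X}}}\,\mathrm d\mu_Z=\int_{\scr{L}(Y)}\mathbf 1_Ve^g\,\mathrm d\mu_Y,$$
and we conclude $[U,f]_{\bb C}=[V,g]_{\bb C}$.

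The main obstacle is technical rather than conceptual. We must check that the change of variables theorem of \cite{CluckersLoeserConstructibleexponential10}, stated for constructible exponential functions, applies to the integrand $\mathbf 1_Ue^f$ with values in $\mathcal{E}xp\mathcal{M}$. The integral on $Z$ involves $\bb{L}^{-\mathrm{ord}K_{Z/X}}$, which is not a cylinder function of bounded level. It is a countable sum over the strata $\{\mathrm{ord}_{K}=n\}$, so convergence must be handled in a suitable completion. Equality of the two sides of the final identity, however, is already an identity of functions on $\scr{L}(Z)$ before integrating. The outer equalities in \eqref{eq:motivic-integral} take place in $\mathcal{E}xp\mathcal{M}_{\bb C}$ itself, so no completion issue affects the conclusion. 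If needed, one can factor through a completion $\widehat{\mathcal{E}xp\mathcal{M}}$ in which the formula holds, and observe that both endpoints lie in the image of $\mathcal{E}xp\mathcal{M}_{\bb C}$. Injectivity is not required, since the comparison of Hodge numbers via \cref{cor::same-Hodge-numbers} factors through this completion as well. Alternatively, one can invoke the change of variables formula for an integrand that does not see the Jacobian: factor $\varphi$ and $\psi$ by weak factorization and compare directly.
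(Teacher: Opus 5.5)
Your proposal is correct and follows essentially the same route as the paper: pass to the common model $Z$, observe $W=\varphi^{-1}(U)=\psi^{-1}(V)$, apply the change of variables formula to $\varphi$ and $\psi$, and match the Jacobian factors using crepancy. The one difference is minor: you get $K_{Z/X}=K_{Z/Y}$ globally from $\varphi^*D_X=\psi^*D_Y$, while the paper only uses $\mathrm{ord}_{K_{Z/X}}=\mathrm{ord}_{K_{Z/Y}}$ on arcs through $W$.

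The completion worry in your last paragraph is unnecessary. In the Cluckers--Loeser form the paper cites, the change of variables formula already holds after inverting only $\mathbb{L}$ and $\mathbb{L}^n-1$, i.e.\ in $\mathcal{E}xp\mathcal{M}_{\mathbb{C}}$ itself. Drop both fallbacks: equality only in a completion would prove less than the statement, and the weak-factorization remark does not work as stated.
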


\begin{proof}
Choose birational morphisms $\varphi\colon Z\to X$ and $\psi\colon Z\to Y$ such that
$$
  g\circ\psi=f\circ\varphi
  \quad\text{and}\quad
  \varphi^*(K_X+D_X)=\psi^*(K_Y+D_Y).
$$
Since $f$ and $g$ extend to morphisms $\tilde{f}$ and $\tilde{g}$ from $X$ and $Y$ to $\mathbb{P}^1$ respectively, we have $U=\tilde{f}\inv(\bb{A}^1)$ and $V=\tilde{g}\inv(\bb{A}^1)$ by non-degenerateness. Set $W:=\psi^{-1}(V)=\varphi^{-1}(U)$ and $F:=(f\circ\varphi)|_W=(g\circ\psi)|_W$.

Consider the residual functions $[V ,g]_Y$ on $\scr L(Y)$ and $[U,f]_X$ on $\scr L(X)$. By the definition of the motivic integral of a residual function and the change of variables formula applied to $\psi$, we have
$$
  \bb{L}^{-\dim V}[V,g]_{\bb{C}}
  =\int_{\scr L(Y)}[V,g]_Y\,\mathrm d\mu_Y
  =\int_{\scr L(Z)}[W,F]_Z\,
    \bb{L}^{-\mathrm{ord}_{K_{Z/Y}}}\,\mathrm d\mu_Z.
$$
Similarly, applying the change of variables formula to $\varphi$ gives
$$
  \bb{L}^{-\dim U}[U,f]_{\bb{C}}
  =\int_{\scr L(X)}[U,f]_X\,\mathrm d\mu_X
  =\int_{\scr L(Z)}[W,F]_Z\,
    \bb{L}^{-\mathrm{ord}_{K_{Z/X}}}\,\mathrm d\mu_Z.
$$

The crepant equality can be rewritten as
$$
  \psi^*D_Y-K_{Z/Y}=\varphi^*D_X-K_{Z/X}.
$$
Since $W=\psi^{-1}(V)=\varphi^{-1}(U)$ is disjoint from both $\psi^*D_Y$ and $\varphi^*D_X$, the orders of these divisors vanish on $\scr L(W)$. Hence
$$
  \mathrm{ord}_{K_{Z/Y}}=\mathrm{ord}_{K_{Z/X}}
  \quad\text{on }\scr L(W),
$$
and the two integrands above coincide. Therefore $\bb{L}^{-\dim V}[V,g]_{\bb{C}}=\bb{L}^{-\dim U}[U,f]_{\bb{C}}$, which yields the desired equality because $\dim U=\dim V$.
\end{proof}

\begin{prop}\label{prop:crepant-minus-trash}
	Given a crepant morphism $\pi\colon(X,D_X,f)\to (Y,D_Y,g)$ between pairs with non-degenerate functions, we have  $[\pi^{-1}V,f]_{\bb{C}}=[V,g]_{\bb{C}}$ in $\mathcal{E}xp\mathcal{M}_{\bb{C}}$, where $U=X\setminus|D_X|$ and $V=Y\setminus|D_Y|$.
\end{prop}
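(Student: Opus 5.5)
The plan is to show that $\pi$ restricts to an isomorphism $\pi^{-1}V\xrightarrow{\sim}V$ under which $f$ corresponds to $g$. Then the equality $[\pi^{-1}V,f]_{\bb{C}}=[V,g]_{\bb{C}}$ follows from relation (i) of \cref{defn::KExpVark}. As a fallback, I would run the change-of-variables argument of \cref{prop:crepant-mor} with $Z=X$ and $\varphi=\mathrm{id}$.

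\emph{Step 1: control the divisors over $V$.} Write the crepant condition as
$$
D_X=\pi^*D_Y-K_{X/Y}.
$$
Restrict this identity to the open set $\pi^{-1}V$. There $\pi^*D_Y$ vanishes, because $V$ is disjoint from $|D_Y|$. So on $\pi^{-1}V$ we get $D_X|_{\pi^{-1}V}=-K_{X/Y}|_{\pi^{-1}V}$. The divisor $D_X$ is effective by the definition of a pair. The relative canonical divisor $K_{X/Y}$ of a birational morphism of smooth varieties is also effective. Both restrictions therefore vanish. This gives two facts: $\pi^{-1}V\subset U=X\setminus|D_X|$, and the Jacobian of $\pi$ does not vanish anywhere on $\pi^{-1}V$.

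\emph{Step 2: $\pi$ is an isomorphism over $V$.} The map $\pi^{-1}V\to V$ is proper and birational between smooth varieties, and by Step~1 it is étale. I expect this step to be the only real obstacle. I would argue as follows. Over the smooth target $V$, the exceptional locus of a proper birational morphism from a smooth variety is either empty or of pure codimension one, and each of its components appears in $K_{X/Y}$ with positive coefficient. Since $K_{X/Y}|_{\pi^{-1}V}=0$, the exceptional locus over $V$ is empty. Then $\pi$ is quasi-finite, proper and birational onto the normal variety $V$, so Zariski's main theorem makes it an isomorphism. Alternatively: a finite étale birational map onto a normal connected base has degree one.

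\emph{Step 3: the functions agree.} The functions $f$ and $g\circ\pi$ agree on a dense open subset of $X$. Both are regular on the reduced, irreducible open set $\pi^{-1}V\subset U$, so they agree on all of $\pi^{-1}V$. Hence $(\pi^{-1}V,f)\cong(V,g)$ via $\pi$, and relation (i) gives the claimed equality already in $\KEXP$, hence in $\mathcal{E}xp\mathcal{M}_{\bb{C}}$.

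\emph{Fallback if Step 2 gives trouble.} Apply the change of variables formula to $\pi$ with the residual function $[V,g]_Y$. By Step~1, $\mathrm{ord}_{K_{X/Y}}$ vanishes on $\scr L(\pi^{-1}V)$. This yields $\bb{L}^{-\dim V}[V,g]_{\bb{C}}=\bb{L}^{-\dim X}[\pi^{-1}V,f]_{\bb{C}}$, and the equality follows since $\dim X=\dim V$.
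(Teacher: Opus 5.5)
Your proof is correct, but it takes a more elementary route than the paper. The paper's proof is your fallback. It applies the motivic change of variables formula to $\pi$ with the residual function $[V,g]_Y$, and uses the crepant identity $\pi^*D_Y-K_{X/Y}=D_X$ to show that $\mathrm{ord}_{K_{X/Y}}$ vanishes on $\scr L(\pi^{-1}V)$. Both arguments rest on your Step~1. The paper only asserts that $\pi^{-1}V$ is disjoint from $D_X$; you justify it by the effectivity of both $D_X$ and $K_{X/Y}$, which is the right reason.

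Your Steps~2--3 then go further. Once the Jacobian is a unit on $\pi^{-1}V$, the proper birational map $\pi^{-1}V\to V$ is finite étale of degree one, hence an isomorphism carrying $f$ to $g$. So the equality already holds in $\KEXP$ by relation (i), before inverting $\bb{L}$ and $\bb{L}^n-1$.

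What your route buys is transparency. It shows the proposition is essentially tautological, and that the real content of the blow-up steps in \cref{prop:crepant-blow-up} is the vanishing of the new exceptional stratum $E^\circ$ in \cref{lem:crepant-blow-up}. The paper's route buys uniformity with \cref{prop:crepant-mor}. There, on $W$ one only gets $K_{Z/X}|_W=K_{Z/Y}|_W$, not that either vanishes (think of flops), so motivic integration is genuinely needed.
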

\begin{proof}
Let  $W:=\pi^{-1}(V)$.
Since $\pi\colon(X,D_X)\to(Y,D_Y)$ is crepant, we have
$$
  \pi^*(K_Y+D_Y)=K_X+D_X.
$$
Writing $K_{X/Y}=K_X-\pi^*K_Y$, this can be rewritten as
$$
  \pi^*D_Y-K_{X/Y}=D_X.
$$

By the definition of the motivic integral of the residual function $[V,g]_Y$ and the change of variables formula applied to $\pi$,
$$
  \bb{L}^{-\dim V}[V,g]_{\bb{C}}
  =\int_{\scr L(Y)}[V,g]_Y\,\mathrm d\mu_Y
  =\int_{\scr L(X)}[W,f|_W]_X\,
    \bb{L}^{-\mathrm{ord}_{K_{X/Y}}}\,\mathrm d\mu_X.
$$

On $\scr L(W)$ we have $\mathrm{ord}_{\pi^*D_Y}=0$ because $W=\pi^{-1}(V)$ is disjoint from $\pi^*D_Y$. Therefore the crepant equality gives
$$
  -\mathrm{ord}_{K_{X/Y}}
  =-\mathrm{ord}_{K_{X/Y}}+\mathrm{ord}_{\pi^*D_Y}
  =\mathrm{ord}_{D_X}
  =0
  \quad\text{on }\scr L(W),
$$
where the last equality holds because $W$ is also disjoint from $D_X$. Hence
$$
  \bb{L}^{-\dim V}[V,g]_{\bb{C}}
  =\int_{\scr L(X)}[W,f|_W]_Z\,\mathrm d\mu_X
  =\bb{L}^{-\dim U}[W,f|_W]_{\bb{C}}
  =\bb{L}^{-\dim U}[\pi^{-1}V,f]_{\bb{C}}.
$$
Since $\dim U=\dim V$, the result follows.
\end{proof}

\begin{lem}\label{lem:crepant-blow-up}
	Let $f$ be a non-degenerate function on $(X,D_X)$. Write $D_X=\sum_{i=1}^r n_iD_i$, and assume that $Z:=Z(f)\cap D_1$ is non-empty. Let $\tilde{X}$ be the blow-up of $X$ at $Z$, $\tilde{D}$ the strict transform of $D_X$, and $\tilde{f}$ the pullback of $f$ to $\tilde{X}$. Then $(\tilde{X},\tilde{D}+(n_1-1)E,\tilde{f})$ is crepant to $(X,D_X,f)$ via the blow-up map. Moreover,
		\begin{enumerate}
			\item $\tilde{f}$ has pole orders $n_1$ and $n_1-1$ along the strict transform of $D_1$ and $E$ respectively, and $Z(\tilde{f})$ intersects $E$ but not the strict transform of $D_1$;
			\item if $n_1=1$, then $\tilde f$ has no pole along $E$, and
		$$
			[E\setminus|\tilde D|,\tilde f]_{\bb C}=0.
		$$
	\end{enumerate}
\end{lem}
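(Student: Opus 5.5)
We work locally along $Z=Z(f)\cap D_1$, which is smooth of codimension two by transversality. Near a point of $Z$, choose local coordinates $(x_1,\dots,x_n)$ on $X$ so that $D_j=\{x_j=0\}$ for the components through the point (say $j=1,\dots,s$), and $s_0=x_{s+1}$. This is possible because $Z(s_0)$ is transversal to the snc divisor. Then
$$
f=\frac{x_{s+1}}{x_1^{n_1}\cdots x_s^{n_s}}\cdot u
$$
with $u$ a unit, and $Z=\{x_1=x_{s+1}=0\}$. The blow-up has two charts.

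In the chart $x_{s+1}=x_1 y$, $E=\{x_1=0\}$ and
$$
\tilde f=\frac{y\,u}{x_1^{n_1-1}x_2^{n_2}\cdots x_s^{n_s}}.
$$
In the chart $x_1=x_{s+1}z$, the strict transform of $D_1$ is $\{z=0\}$, $E=\{x_{s+1}=0\}$, and
$$
\tilde f=\frac{u}{z^{n_1}x_{s+1}^{n_1-1}x_2^{n_2}\cdots}.
$$
From these formulas we read off the following:
\begin{itemize}
\item the pole orders are $n_1$ along $\tilde D_1$ and $n_1-1$ along $E$;
\item $Z(\tilde f)=\{y=0\}$ meets $E$;
\item in the second chart $\tilde f$ has no zeros, so $Z(\tilde f)\cap\tilde D_1=\emptyset$;
\item the new boundary $\tilde D+(n_1-1)E$ is snc, and $Z(\tilde f)$ is transversal to it, since $y$ is a coordinate.
\end{itemize}
Globally, $\tilde f=\tilde s_0/\tilde s_\infty$, where $\tilde s_0$ is the section of $\pi^*\mathcal O(D_X)(-E)$ cutting out the strict transform of $Z(s_0)$, and $\tilde s_\infty=\tilde\sigma_1^{n_1}\sigma_E^{n_1-1}\prod_{j\ge2}\tilde\sigma_j^{n_j}$. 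This shows that $(\tilde X,\tilde D+(n_1-1)E,\tilde f)$ is a pair with a non-degenerate function. One small point to check is that the transforms of the $D_j$ for $j\ge 2$ remain sections; this holds because $Z\not\subset D_j$ generically, and the local charts handle the remaining cases.

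Crepancy comes from the canonical bundle formula for blowing up a smooth codimension-two center, $K_{\tilde X}=\pi^*K_X+E$. Since $D_1\supset Z$ is smooth along $Z$ with multiplicity one, $\pi^*D_1=\tilde D_1+E$, while $\pi^*D_j=\tilde D_j$ for $j\ge2$. This uses that $Z\subset D_j$ only for $j=1$: $Z$ has codimension two, and transversality prevents it from lying in other components. Hence
$$
\pi^*(K_X+D_X)=K_{\tilde X}-E+\tilde D+n_1E=K_{\tilde X}+\tilde D+(n_1-1)E.
$$
The functions agree off $E$ by construction. This proves the first assertion and (1).

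For (2), take $n_1=1$. Then $\tilde f$ has no pole along $E$, and $E^\circ:=E\setminus|\tilde D|$ lies in the new open set. The key step, and the main obstacle, is to identify $E^\circ\cong Z^\circ\times\bb A^1$, where $Z^\circ=Z\setminus\bigcup_{j\ge2}D_j$, in such a way that $\tilde f|_{E^\circ}$ is the coordinate on the $\bb A^1$ factor.

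The fibre of $E\to Z$ is $\bb P(N_{Z/X})=\bb P^1$, with homogeneous coordinates $[x_1:x_{s+1}]$. Removing $\tilde D_1$, which meets $E$ in the section $\{[x_1:x_{s+1}]=[0:1]\}$, leaves an $\bb A^1$-bundle with coordinate $y=x_{s+1}/x_1$. By the first chart, on it
$$
\tilde f|_E=\frac{y\,u|_Z}{x_2^{n_2}\cdots x_s^{n_s}}.
$$
On $Z^\circ$ the factor $c:=u|_Z/(x_2^{n_2}\cdots)$ is an invertible function. So $t:=\tilde f|_{E^\circ}$ is a fibrewise affine-linear coordinate, and $(\pi,\tilde f)\colon E^\circ\to Z^\circ\times\bb A^1$ is an isomorphism. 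The cleanest way to see this globally is that $\tilde f$ restricted to $E^\circ$ is a nowhere-vanishing linear form on the line bundle $N_{D_1/X}|_{Z^\circ}$, tensored appropriately, which trivializes the affine bundle.

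Granting this isomorphism, relations (i) and (iii) of \cref{defn::KExpVark} give $[E^\circ,\tilde f]=[Z^\circ\times\bb A^1,\pr_2]=0$. If the global trivialization is delicate, we instead stratify $Z^\circ$ into pieces over which the bundle is trivial and apply relation (ii) to each piece.
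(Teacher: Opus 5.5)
Your proposal is correct and follows essentially the same route as the paper. Both arguments use the same local coordinates and the same two blow-up charts to read off the pole orders and transversality, and both get crepancy from $K_{\tilde X}=\pi^*K_X+E$ together with $\pi^*D_1=\tilde D_1+E$. Both then identify $(\pi,\tilde f)\colon E^\circ\xrightarrow{\sim}Z^\circ\times\mathbb{A}^1$; the paper phrases this through the line bundle $\mathcal{O}_X(Z(f)-D_1)|_Z$ trivialized by $\tilde f$. Your stratification fallback is unnecessary: $(\pi,\tilde f)$ is a globally defined morphism, and your chart computation already shows it is an isomorphism locally over $Z^\circ$.
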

\begin{proof}
Since $Z(f)$ intersects $D_X$ transversally, $Z$ is smooth of codimension two. We have
$$
	K_{\tilde X}=\pi^*K_X+E,
	\qquad
	\pi^*D_1=\tilde D_1+E,
	\qquad
	\pi^*D_i=\tilde D_i\quad(i\neq1).
$$
Hence
$$
	\pi^*D_X=\tilde D+n_1E,
$$
and therefore
$$
\begin{aligned}
	K_{\tilde X}+\tilde D+(n_1-1)E
	&=(\pi^*K_X+E)+(\pi^*D_X-n_1E)+(n_1-1)E\\
	&=\pi^*(K_X+D_X).
\end{aligned}
$$
Thus $\pi$ is crepant.

We compute locally near a point $p\in Z$. Let
$$
	I(p):=\{j\neq1\mid p\in D_j\}.
$$
By transversality, there are local coordinates
$x,y,(z_j)_{j\in I(p)},(z_k)_{k\not\in I(p)}$ such that
$$
	D_1=\{x=0\},\qquad Z(f)=\{y=0\},\qquad
	D_j=\{z_j=0\}\quad(j\in I(p)),
$$
and
$$
	f=
	\frac{y}
	{x^{n_1}\prod_{j\in I(p)}z_j^{n_j}}
	u(x,y,z_j,z_k),
$$
where $u$ is a unit.

In the chart $y=xv$, one has
$$
	\tilde f=
	\frac{v}
	{x^{n_1-1}\prod_{j\in I(p)}z_j^{n_j}}
	u(x,xv,z_j,z_k),
$$
while in the chart $x=yw$, one has
$$
	\tilde f=
	\frac{1}
	{w^{n_1}y^{n_1-1}\prod_{j\in I(p)}z_j^{n_j}}
	u(yw,y,z_j,z_k).
$$
These formulas show that the pole orders along $\tilde D_1$ and $E$ are $n_1$ and $n_1-1$, respectively. They also show that $Z(\tilde f)$ meets $E$ along $\{x=v=0\}$ in the first chart, but does not meet $\tilde D_1$. They further show that the new boundary has simple normal crossing support and that $Z(\tilde f)$ is transversal to it.

Suppose now that $n_1=1$. By transversality, the natural map
$$
	N_{Z/D_1}\oplus N_{Z/Z(f)}\longrightarrow N_{Z/X}
$$
is an isomorphism, and
$$
	N_{Z/Z(f)}\cong N_{D_1/X}|_Z.
$$
With the convention that $\mathbb P(N_{Z/X})$ parametrizes lines, removing the section
$$
	\tilde D_1\cap E=\mathbb P(N_{Z/D_1})
$$
from $E=\mathbb P(N_{Z/X})$ gives the total space of
$$
	\mathcal L
	:=
	N_{Z/D_1}\otimes N_{D_1/X}^{-1}|_Z
	\cong\mathcal O_X(Z(f)-D_1)|_Z.
$$
Moreover, for $j\neq1$,
$$
	E\cap\tilde D_j=\pi^{-1}(Z\cap D_j),
$$
and consequently
$$
	E^\circ\cong\mathbb V(\mathcal L|_{Z^\circ}),
$$
where $E^\circ =E-|\tilde{D}|$ and $Z^\circ=Z-\cup_{j\neq 1}|D_j|$.
The restriction $\tilde f|_{E^\circ}$ is fibrewise linear, and hence corresponds to a section
$$
	\sigma\in\Gamma(Z^\circ,\mathcal L^\vee).
$$
Indeed, in a local fibre coordinate $v$, the preceding formula becomes
$$
	\tilde f|_{E^\circ}
	=
	a\,v,
	\qquad
	a=
	\frac{u(0,0,z_j,z_k)}
	{\prod_{j\in I(p)}z_j^{n_j}}
	\in\mathcal O^\times_{Z^\circ}.
$$
Thus $\sigma$ is nowhere vanishing and trivializes $\mathcal L^\vee$. Under the induced trivialization of $\mathcal L$, the function $\tilde f$ is the standard fibre coordinate. Therefore
$$
	(\pi|_{E^\circ},\tilde f):
	E^\circ\xrightarrow{\sim}Z^\circ\times\mathbb A^1,
$$
and hence
$$
	[E^\circ,\tilde f]_{\bb C}
	=
	[Z^\circ\times\mathbb A^1,\pr_2]_{\bb C}
	=0.
$$
\end{proof}
\begin{prop}\label{prop:crepant-blow-up}
	Given a pair with non-degenerate functions $(X,D_X,f)$, there exists a crepant morphism $\pi\colon (\tilde{X},D_{\tilde{X}},\tilde{f})\to (X,D_X,f)$ such that $\tilde{f}$ extends to a proper morphism $\tilde{X}\to \bb{P}^1_{\bb{C}}$ and
	$$
		[\tilde{X}\setminus |D_{\tilde{X}}|,\tilde{f}]_{\bb{C}}=[X\setminus |D_X|,f]_{\bb{C}}
	$$
	in $\mathcal{E}xp\mathcal{M}_{\bb{C}}$.
\end{prop}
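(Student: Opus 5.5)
The plan is to iterate \cref{lem:crepant-blow-up}, with an induction on a numerical invariant measuring how far $Z(f)$ is from being disjoint from the pole divisor. Write $D_X=\sum_{i=1}^r n_iD_i$ and set
$$
N(X,D_X,f):=\sum_{i\,:\,Z(f)\cap D_i\neq\emptyset} n_i .
$$
If $N=0$, then $Z(f)$ is disjoint from $|D_X|$. In that case $f=s_0/s_\infty$ with $Z(s_0)\cap Z(s_\infty)=\emptyset$, so the pencil is base-point free and $f$ extends to a morphism $X\to\bb{P}^1$ with $\bar f^*(\infty)=D_X$. We then take $\pi=\mathrm{id}$.

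If $N>0$, choose a component $D_1$ with $Z:=Z(f)\cap D_1\neq\emptyset$ and blow up $Z$. By \cref{lem:crepant-blow-up}, the triple $(\tilde X,\tilde D+(n_1-1)E,\tilde f)$ is crepant to $(X,D_X,f)$ via $\pi$, and $\tilde f=f\circ\pi$ on the dense open set $\pi^{-1}(X\setminus|D_X|)$.

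We must check that the new triple is again a pair with a non-degenerate function. The local computations in the proof of the lemma show three things: the new boundary has simple normal crossing support, $\tilde f$ has exactly this pole divisor, and $Z(\tilde f)$ is transversal to it. To produce the global sections required in \cref{defn:non-degenerate}, take $\sigma_E$ to be the canonical section of $\cc{O}(E)$ and pull back the other $\sigma_i$. Then $\pi^*\sigma_1=\tilde\sigma_1\sigma_E$, so $s_\infty$ pulls back to $\tilde s_\infty\cdot\sigma_E$ and $\tilde s_0=\pi^*s_0/\sigma_E$. This last quotient is a section because $s_0$ vanishes on $Z$.

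The invariant decreases under this step. By part (1) of the lemma, $Z(\tilde f)$ no longer meets $\tilde D_1$. It may meet $E$, but $E$ carries multiplicity $n_1-1<n_1$ (and is dropped when $n_1=1$). The other components meet $Z(\tilde f)$ only if their images met $Z(f)$. So $N$ drops by at least one, and by induction the process terminates in $(X_n,D_n,f_n)$ with $f_n$ extending to $X_n\to\bb{P}^1$. Compositions of crepant morphisms are crepant, since $\pi_i^*(K+D)$ composes, and the functions agree generically. This gives the required $\pi$.

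For the motivic identity, it suffices to prove equality of classes at each step. Applying \cref{prop:crepant-minus-trash} to $\pi$ gives
$$
[\pi^{-1}(X\setminus|D_X|),\tilde f]_{\bb C}=[X\setminus|D_X|,f]_{\bb C}.
$$
We then compare $\tilde X\setminus|\tilde D+(n_1-1)E|$ with $\pi^{-1}(X\setminus|D_X|)$ using the scissor relation (ii).

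If $n_1\ge 2$, then $E$ lies in the new boundary, so the two open sets coincide, because $\pi^{-1}(|D_X|)=|\tilde D|\cup E$. If $n_1=1$, the new open set is the disjoint union of $\pi^{-1}(X\setminus|D_X|)$ and the closed piece $E\setminus|\tilde D|$. By part (2) of the lemma, this piece has class $[E\setminus|\tilde D|,\tilde f]_{\bb C}=0$, so the classes agree.

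The main obstacle I expect is verifying that each blow-up again produces a genuine pair with a non-degenerate function, globally and not only in local charts. The points needing care are the section bookkeeping above and the fact that $Z$ is smooth of codimension two (it may be disconnected, but it is a transversal intersection). Also, when $n_1=1$ the divisor $E$ disappears from the boundary, so one must confirm that $\tilde f$ is regular on $E^\circ$; this follows from the first chart formula.
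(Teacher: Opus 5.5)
Your proposal is correct and follows the paper's proof essentially step for step. The shared ingredients are induction on the total multiplicity $N$ of pole components meeting $Z(f)$, one blow-up per step via \cref{lem:crepant-blow-up}, \cref{prop:crepant-minus-trash} for $\pi^{-1}(X\setminus|D_X|)$, and the vanishing of $[E\setminus|\tilde D|,\tilde f]_{\bb C}$ when $n_1=1$; your explicit sections $\tilde s_0=\pi^*s_0/\sigma_E$ fill in the global non-degeneracy check that the paper leaves to the local charts. One bookkeeping caveat, which affects the paper's count equally: if $Z(f)\cap D_1$ is disconnected then $E$ is reducible, so for $N$ to drop by one you must treat $E$ as a single smooth (possibly disconnected) boundary divisor---harmless, since neither the lemma nor \cref{defn:non-degenerate} uses irreducibility---or else conclude termination from the well-founded multiset ordering, since each step replaces one multiplicity $n_1$ by finitely many copies of $n_1-1$ (none when $n_1=1$).
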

\begin{proof}
	Assume that the pole divisor of $f$ is $\sum_{i\in I} a_iD_i$, where the $D_i$ are the irreducible components of $D_X$, and let $I'\subseteq I$ be the set of indices such that $Z(f)$ intersects $D_i$. We argue by induction on the total multiplicity $N:=\sum_{i\in I'}a_i$. When $N=0$, $Z(f)$ does not intersect $D$. So we can take $\pi=\mathrm{id}$ and there is nothing to prove.

	If there exists $i_0\in I'$ such that $a_{i_0}>1$, we blow-up $X$ along $Z(f)\cap D_{i_0}$ to obtain a new pair $(X',D_{X'},f')$ with
	$$
		D_{X'}:=\widetilde{D_X}+(a_{i_0}-1)E,
	$$
	where $\widetilde{D_X}$ is the strict transform of $D_X$ and $E$ is the exceptional divisor. By \cref{lem:crepant-blow-up}, this $(X',D_{X'},f')$ is crepant to $(X,D_X,f)$. Moreover, the pole orders of $f'$ along the strict transform of $D_{i_0}$ and $E$ are $a_{i_0}$ and $a_{i_0}-1$, respectively, and $Z(f')$ intersects $E$ but not the strict transform of $D_{i_0}$. Since $E\subseteq |D_{X'}|$, we have $$X'\setminus |D_{X'}|=\pi^{-1}(X\setminus |D_X|),$$ hence \cref{prop:crepant-minus-trash} gives
	$$
		[X'\setminus |D_{X'}|,f']_{\bb{C}}=[X\setminus |D_X|,f]_{\bb{C}}
	$$
	in $\mathcal{E}xp\mathcal{M}_{\bb{C}}$. Therefore $Z(f')$ intersects $D_j$ for $j\in I'\setminus\{i_0\}$ and $E$, and the total multiplicity is $$N-a_{i_0}+(a_{i_0}-1)=N-1.$$ Thus we reduce to the pair $(X',D_{X'},f')$ and may apply the induction hypothesis.
	
	If all intersecting components have multiplicity $1$, i.e., $a_i=1$ for all $i\in I'$, choose $i_0\in I'$ and blow-up $X$ along $Z(f)\cap D_{i_0}$ to obtain $(X',D_{X'},f')$, where $D_{X'}$ is the strict transform of $D_X$. By \cref{lem:crepant-blow-up}, the blow-up $$\pi\colon (X',D_{X'},f')\to (X,D_X,f)$$ is crepant and $f'$ is regular along $E$. We have
	$$
		X'\setminus |D_{X'}|=\pi^{-1}(X\setminus |D_X|)\cup (E\setminus |D_{X'}|).
	$$
	Hence, by \cref{prop:crepant-minus-trash},
	$$
		[X'\setminus |D_{X'}|,f']_{\bb{C}}=[X\setminus |D_X|,f]_{\bb{C}}+[E^\circ,f']_{\bb{C}},
	$$
	where $E^\circ=E\setminus |D_{X'}|$. By \cref{lem:crepant-blow-up}, $[E^\circ,f']_{\bb{C}}$ vanishes in $\mathcal{E}xp\mathcal{M}_{\bb{C}}$. Hence, we again reduce to the case of $(X',D_{X'},f')$ with total multiplicity $N-1$ and we can apply the induction hypothesis. 
\end{proof}

\begin{thm}\label{thm::main}
	Given two pairs with non-degenerate functions $(X,D_X,f)$ and $(Y,D_Y,g)$ that are crepant to each other, they define the same class in $\mathcal{E}xp\mathcal{M}_{\bb{C}}$.
\end{thm}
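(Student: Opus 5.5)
The plan is to reduce to \cref{prop:crepant-mor} by means of \cref{prop:crepant-blow-up}. Apply \cref{prop:crepant-blow-up} to $(X,D_X,f)$ and to $(Y,D_Y,g)$. This produces crepant morphisms
$$\pi_X\colon(\tilde X,D_{\tilde X},\tilde f)\to(X,D_X,f)\qquad\text{and}\qquad\pi_Y\colon(\tilde Y,D_{\tilde Y},\tilde g)\to(Y,D_Y,g).$$
Here $\tilde f$ and $\tilde g$ extend to morphisms to $\bb{P}^1_{\bb{C}}$, and
$$[\tilde X\setminus|D_{\tilde X}|,\tilde f]_{\bb C}=[X\setminus|D_X|,f]_{\bb C},\qquad [\tilde Y\setminus|D_{\tilde Y}|,\tilde g]_{\bb C}=[Y\setminus|D_Y|,g]_{\bb C}.$$
It then suffices to show that $(\tilde X,D_{\tilde X},\tilde f)$ and $(\tilde Y,D_{\tilde Y},\tilde g)$ are crepant in the sense of \cref{defn:crepant}. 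Once this is known, \cref{prop:crepant-mor} gives equality of their classes, and chaining the three equalities proves the theorem.

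To establish that crepancy, start from the given common model $\varphi\colon Z\to X$, $\psi\colon Z\to Y$. Choose a smooth projective $Z'$ (by resolution of singularities, e.g.\ resolving the closure of the graph of the birational map $Z\dashrightarrow\tilde X\times_X\cdots$) together with birational morphisms $\alpha\colon Z'\to\tilde X$, $\beta\colon Z'\to\tilde Y$ and $\gamma\colon Z'\to Z$ satisfying
$$\pi_X\circ\alpha=\varphi\circ\gamma\qquad\text{and}\qquad\pi_Y\circ\beta=\psi\circ\gamma.$$
Concretely, take a resolution of the closure of the image of a dense open subset of $Z$ in $Z\times\tilde X\times\tilde Y$. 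The crepancy conditions $\pi_X^*(K_X+D_X)=K_{\tilde X}+D_{\tilde X}$ and $\pi_Y^*(K_Y+D_Y)=K_{\tilde Y}+D_{\tilde Y}$, together with the equality $\varphi^*(K_X+D_X)=\psi^*(K_Y+D_Y)$ pulled back along $\gamma$, then give
$$\alpha^*(K_{\tilde X}+D_{\tilde X})=\gamma^*\varphi^*(K_X+D_X)=\gamma^*\psi^*(K_Y+D_Y)=\beta^*(K_{\tilde Y}+D_{\tilde Y}).$$
For compatibility of the functions: $\tilde f=f\circ\pi_X$ and $\tilde g=g\circ\pi_Y$ hold on dense opens, and $f\circ\varphi=g\circ\psi$ holds on a dense open of $Z$. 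Hence $\tilde f\circ\alpha$ and $\tilde g\circ\beta$ agree as rational functions on $Z'$, and therefore on a dense open subset.

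The main point needing care is the linear-equivalence versus equality issue in the crepant condition. The equalities above should be understood as equalities of divisors (or of divisor classes, consistently with how \cref{prop:crepant-mor} uses them via $\mathrm{ord}$). Since the canonical divisors are compatible under pullback through the relative canonical divisors $K_{Z'/Z}$ and so on, I would rewrite everything in terms of relative canonical divisors, as in the proof of \cref{prop:crepant-mor}, to make the chain of equalities literal. With this, the hypotheses of \cref{prop:crepant-mor} hold for the modified pairs, and the theorem follows.
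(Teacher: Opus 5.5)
Your proposal is correct and follows the paper's proof. Both apply \cref{prop:crepant-blow-up} to each triple, show the modified triples are still crepant by pulling back log canonical divisors and functions to a common resolution, and conclude with \cref{prop:crepant-mor}; you use a single $Z'$ dominating $Z$, $\tilde X$, $\tilde Y$ where the paper uses a two-step diagram through $X'$ and $Y'$, and your remark about reading the crepant condition as a literal equality of divisors via relative canonical divisors makes explicit a convention the paper uses implicitly.
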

\begin{proof}
	Let $(X,D_X,f)$ and $(Y,D_Y,g)$ be two non-degenerate triples that are crepant to each other. Then there exists a smooth proper variety $Z$ and birational morphisms $\varphi\colon Z\to X$ and $\psi\colon Z\to Y$ such that
	$$
		\varphi^*(K_X+D_X)=\psi^*(K_Y+D_Y)
	$$
	and $f\circ\varphi=g\circ\psi$ on a Zariski dense open subset of $Z$.
	
	By \cref{prop:crepant-blow-up}, there exist crepant morphisms
	$$
		\tilde{\varphi}\colon (\tilde{X},D_{\tilde{X}},\tilde{f})\to (X,D_X,f)
		\quad\text{and}\quad
		\tilde{\psi}\colon (\tilde{Y},D_{\tilde{Y}},\tilde{g})\to (Y,D_Y,g)
	$$
	such that $\tilde{f}$ and $\tilde{g}$ extend to proper morphisms $\tilde{X}\to\bb{P}^1_{\bb{C}}$ and $\tilde{Y}\to\bb{P}^1_{\bb{C}}$, respectively. 

	Consider the diagram
	$$
	\begin{tikzcd}
		& & \tilde{Z} \ar[ld]\ar[rd] &&\\
		& X' \ar[ld]\ar[rd]& & Y' \ar[ld]\ar[rd]& \\
		\tilde{X}\ar[rd,"\tilde{\varphi}"] & & Z\ar[ld,"\varphi"']\ar[rd,"\psi"] &&\tilde{Y}\ar[ld,"\tilde{\psi}"']\\
		& X &&Y&
	\end{tikzcd}
	$$
	where $X'$ maps birationally onto $\tilde{X}$ and $Z$, $Y'$ maps birationally onto $\tilde{Y}$ and $Z$, and $\tilde{Z}$ maps birationally onto $X'$ and $Y'$. Pulling back the log canonical divisors to $\tilde{Z}$ and using the crepancy of each map, one deduces that $(\tilde{X},D_{\tilde{X}},\tilde{f})$ and $(\tilde{Y},D_{\tilde{Y}},\tilde{g})$ are crepant to each other. 

	Therefore, by \cref{prop:crepant-blow-up} and \cref{prop:crepant-mor}, we have
	$$
	[X\setminus|D_X|,f]_{\bb{C}}=[\tilde{X}\setminus|D_{\tilde{X}}|,\tilde{f}]_{\bb{C}}=[\tilde{Y}\setminus|D_{\tilde{Y}}|,\tilde{g}]_{\bb{C}}=[Y\setminus|D_Y|,g]_{\bb{C}}
	$$
	in $\mathcal{E}xp\mathcal{M}_{\bb{C}}$.
\end{proof}

\begin{proof}[Proof of \upshape{\cref{intro::main}}]
	By \cref{thm::main}, the classes $[X\setminus|D_X|,f]_{\mathbb{C}}$ and $[Y\setminus|D_Y|,g]_{\mathbb{C}}$ are equal in $\mathcal{E}xp\mathcal{M}_{\mathbb{C}}$. Their associated exponential mixed Hodge structures are pure by \cref{prop:purity-EMHS}. The result follows from \cref{cor::same-Hodge-numbers}.
\end{proof}

\subsection{Applications}
\label{sec:Laurent-mutations-and-toric-Landau-Ginzburg-mirrors}
One direct application of \cref{intro::main} is the invariance under flops.
\begin{cor}\label{flops}
	For $i=1,2$, let
	$$
		\bar f_i\colon X_i\longrightarrow\bb{P}^1
	$$
	be a non-constant morphism from a smooth projective variety. Set
	$$
		D_i:=\bar f_i^*(\infty),\qquad
		U_i:=X_i\setminus|D_i|,
		\qquad f_i:=\bar f_i|_{U_i},
	$$
	and assume that $D_i$ has simple normal crossing support. If $X_1$ and $X_2$ are related by a sequence of flops over $\bb{P}^1$, then
	$$
		h^\alpha(U_1,f_1,k)=h^\alpha(U_2,f_2,k)
	$$
	for every $\alpha\in\bb{Q}$ and $k\in\bb{Z}$.
\end{cor}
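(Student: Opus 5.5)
We reduce \cref{flops} to \cref{intro::main} by showing that $(X_1,D_1,f_1-c)$ and $(X_2,D_2,f_2-c)$ are crepant pairs with non-degenerate functions for a suitable constant $c$. There are three steps: make the functions non-degenerate, check crepancy for a single flop, and pass to sequences of flops.

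\textbf{Non-degeneracy.} Following \cref{exe:Laurent-Polynomial}, choose $c\in\bb{C}$ that is a regular value of both $\bar f_1$ and $\bar f_2$; this is possible by generic smoothness. Then $\bar f_i^{-1}(c)$ is a smooth divisor disjoint from $|D_i|=\bar f_i^{-1}(\infty)$. Hence $(X_i,D_i,f_i-c)$ is a pair with a non-degenerate function: take $s_0$ to be the section of $\cc{O}(D_i)$ pulled back from $\cc{O}_{\bb{P}^1}(1)$ vanishing at $c$, and transversality holds vacuously because the two loci are disjoint. Replacing $f_i$ by $f_i-c$ tensors the twisted de Rham complex with the constant $e^{-c}$, so it does not change $\rr{H}^k_{\rr{dR}}(U_i,f_i)$ or its irregular Hodge filtration. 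It therefore suffices to compare $(U_i,f_i-c)$.

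\textbf{One flop.} Suppose first that $X_1\dashrightarrow X_2$ is a single flop over $\bb{P}^1$. Then $X_1\to W\leftarrow X_2$ are small contractions over $\bb{P}^1$, and $K_{X_1}$, $K_{X_2}$ are the strict transforms of each other. Take a common resolution $Z$ with morphisms $\varphi\colon Z\to X_1$ and $\psi\colon Z\to X_2$ commuting over $W$, hence over $\bb{P}^1$; this gives $\bar f_1\circ\varphi=\bar f_2\circ\psi$. The standard negativity-lemma argument shows $\varphi^*K_{X_1}=\psi^*K_{X_2}$. Indeed, the difference $\varphi^*K_{X_1}-\psi^*K_{X_2}$ is exceptional over $W$. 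Since $K_{X_1}$ is numerically trivial over $W$ (or anti-ample and ample on the two sides, depending on the convention), the difference is $\varphi$-nef up to sign and the symmetric one is $\psi$-nef. Applying the negativity lemma in both directions forces it to vanish. By \cref{rmk:proper-crepant-k-equivalence}, $K$-equivalence plus compatibility over $\bb{P}^1$ gives
$$\varphi^*(K_{X_1}+D_1)=\psi^*(K_{X_2}+D_2),$$
since $\varphi^*D_1=\psi^*D_2$ is the pullback of $\infty$. The functions $f_1-c$ and $f_2-c$ therefore agree on $Z$, and the two triples are crepant in the sense of \cref{defn:crepant}.

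\textbf{Sequences of flops.} For a chain $X_1=Y_0\dashrightarrow Y_1\dashrightarrow\cdots\dashrightarrow Y_m=X_2$ of flops over $\bb{P}^1$, each intermediate $Y_j$ is smooth projective with a morphism to $\bb{P}^1$. Two issues arise here. First, $Y_j$ needs simple normal crossing fibre at infinity, which we either assume for the intermediate steps or avoid altogether. The cleaner route avoids them: compose the $K$-equivalences directly on one common resolution dominating all $Y_j$ compatibly over $\bb{P}^1$, as in the diagram of \cref{thm::main}. This gives $\varphi^*K_{X_1}=\psi^*K_{X_2}$ for $X_1$ and $X_2$ alone, and then \cref{rmk:proper-crepant-k-equivalence} applies with only the hypotheses on $X_1$ and $X_2$. \cref{intro::main} then yields $h^\alpha(U_1,f_1,k)=h^\alpha(U_2,f_2,k)$.

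The main obstacle is the $K$-equivalence step, namely verifying $\varphi^*K_{X_1}=\psi^*K_{X_2}$ for flops that are only relative over $\bb{P}^1$. This depends on the precise convention for a flop, in particular whether $K$ is required to be trivial on the flopped curves. I would handle it by the negativity lemma over the base $W$.
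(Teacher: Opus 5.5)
Your proposal is correct and follows the same route as the paper. You translate by a common regular value $c$ to obtain non-degenerate triples. You then use that flops over $\bb{P}^1$ give a $K$-equivalence on a common resolution compatible with $\bar f_1,\bar f_2$, so $\varphi^*D_1=\psi^*D_2$ upgrades it to crepancy, and you conclude by \cref{intro::main}. Your negativity-lemma and transitivity arguments simply spell out the step the paper asserts in one line.

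One correction: drop the parenthetical ``anti-ample and ample on the two sides.'' That configuration is a flip, and for a flip $\varphi^*K_{X_1}-\psi^*K_{X_2}$ is nonzero effective, so no $K$-equivalence follows. The argument genuinely needs $K$ to be numerically trivial over $W$, which is the standard meaning of a flop.
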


\begin{proof}
	A sequence of flops gives a $K$-equivalence. Hence there exist a common smooth projective resolution and birational morphisms
	$$
		p\colon W\longrightarrow X_1,
		\qquad q\colon W\longrightarrow X_2
	$$
	such that $p^*K_{X_1}=q^*K_{X_2}$. Since the flops are over $\bb{P}^1$, the morphisms $\bar f_1\circ p$ and $\bar f_2\circ q$ agree. Therefore
	$$
		p^*D_1=(\bar f_1\circ p)^*(\infty)
		=(\bar f_2\circ q)^*(\infty)=q^*D_2,
	$$
	where the equality retains all multiplicities. It follows that
	$$
		p^*(K_{X_1}+D_1)=q^*(K_{X_2}+D_2).
	$$

	Choose a common finite regular value $c$ of $\bar f_1$ and $\bar f_2$. Then $\bar f_i^{-1}(c)$ is smooth and disjoint from $D_i$, so $(X_i,D_i,f_i-c)$ is a pair with a non-degenerate function. The two translated functions agree on a Zariski dense open subset of $W$. Thus the two pairs are crepant in the sense of \cref{defn:crepant}, and \cref{intro::main} gives the equality of their irregular Hodge numbers. Finally, replacing $f_i$ by $f_i-c$ changes neither $\rr{d}f_i$ nor its pole divisor. Hence the corresponding filtered twisted de Rham complexes, and therefore their irregular Hodge numbers, are unchanged.
\end{proof}
Another application comes from log Calabi--Yau varieties.
\begin{cor}\label{cor:birational-log-CY}
	Let $(X,D_X,f)$ and $(Y,D_Y,g)$ be two pairs with non-degenerate functions such that
	$$
		K_X+D_X=0
		\qquad\text{and}\qquad
		K_Y+D_Y=0.
	$$
	If $\varphi\colon X\dashrightarrow Y$ is a birational map such that $f=g\circ\varphi$ as rational functions, then
	$$
		h^\alpha(X\setminus|D_X|,f,i)
		=
		h^\alpha(Y\setminus|D_Y|,g,i)
	$$
	for every $\alpha\in\bb Q$ and $i\in\bb Z$.
\end{cor}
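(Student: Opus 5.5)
The plan is to reduce the corollary to \cref{intro::main} by checking that the two pairs are crepant in the sense of \cref{defn:crepant}.

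First, resolve the indeterminacy of the birational map $\varphi\colon X\dashrightarrow Y$. By Hironaka, the closure of the graph of $\varphi$ admits a resolution, which gives a smooth projective variety $Z$ with birational morphisms $p\colon Z\to X$ and $q\colon Z\to Y$ such that $q=\varphi\circ p$ as rational maps. Since $f=g\circ\varphi$ as rational functions, the functions $f\circ p$ and $g\circ q$ agree as rational functions on $Z$. Hence they coincide on a Zariski dense open subset of $Z$, for instance on $p^{-1}(X\setminus|D_X|)\cap q^{-1}(Y\setminus|D_Y|)$ intersected with the locus where $p$ and $q$ are isomorphisms onto their images.

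Second, check the crepant condition. The log Calabi--Yau hypotheses give
$$
p^*(K_X+D_X)=p^*0=0=q^*0=q^*(K_Y+D_Y).
$$
One point needs care: $K_X+D_X=0$ should be read as linear equivalence, or as an equality of divisor classes in $\mathrm{Pic}$. The pullback identity then holds as divisor classes. This is exactly the sense in which the crepant condition is used in the proofs of \cref{prop:crepant-mor} and \cref{prop:crepant-minus-trash}, since only the relative canonical divisors $K_{Z/X}$ and $K_{Z/Y}$ enter the order functions.

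The main obstacle is making this rigorous: those proofs need an equality of actual divisors $\psi^*D_Y-K_{Z/Y}=\varphi^*D_X-K_{Z/X}$, not merely of classes. I would handle it as in the classical Batyrev--Kontsevich argument for Calabi--Yau varieties. Choose a rational top form $\omega_X$ on $X$ with $\mathrm{div}(\omega_X)=-D_X$, which exists by the log Calabi--Yau condition. Then set $\omega_Y:=(\varphi^{-1})^*\omega_X$. The divisor $\mathrm{div}(\omega_Y)+D_Y$ is principal, effective-free, and supported on the $\varphi^{-1}$-exceptional locus together with $D_Y$. Comparing on $Z$ the divisors $\mathrm{div}(p^*\omega_X)=\mathrm{div}(q^*\omega_Y)$ should yield $p^*D_X-K_{Z/X}=q^*(-\mathrm{div}\,\omega_Y)-K_{Z/Y}$.

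If $\mathrm{div}(\omega_Y)\neq -D_Y$, then $-\mathrm{div}\,\omega_Y-D_Y=\mathrm{div}(h)$ for some rational function $h$. On the projective variety $Y$, the anticanonical form is unique up to such twists, so I would argue that a suitable scalar normalization makes $h$ constant. Alternatively, I would simply interpret \cref{defn:crepant} at the level of linear equivalence, as the paper implicitly does.

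Finally, with crepancy established, \cref{intro::main} applies directly and gives the equality of irregular Hodge numbers $h^\alpha$ for all $\alpha$ and $i$. Equivalently, \cref{thm::main} together with \cref{prop:purity-EMHS} and \cref{cor::same-Hodge-numbers} gives the same conclusion.
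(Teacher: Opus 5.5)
Your main line is exactly the paper's proof: resolve $\varphi$ by a smooth projective $Z$, note $p^*(K_X+D_X)=0=q^*(K_Y+D_Y)$ and $f\circ p=g\circ q$, and apply \cref{intro::main}.

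The detour, however, contains a step that cannot work. Rescaling $\omega_Y$ multiplies $h$ by a nonzero constant, so it never changes $\mathrm{div}(h)$ and cannot turn a nonconstant $h$ into a constant.

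The detour is also unnecessary. Your fallback reading of \cref{defn:crepant} up to linear equivalence is harmless, because each actual-divisor identity used later can be recovered from it.
\begin{itemize}
\item In \cref{prop:crepant-minus-trash}, write $\pi^*D_Y-K_{X/Y}-D_X=\mathrm{div}(h)$. The pole divisor of a non-degenerate function is exactly $D$ and shares no component with its zero divisor. Hence $\pi_*\mathrm{div}(f)=\mathrm{div}(g)$ forces $\pi_*D_X=D_Y$. So $\mathrm{div}_Y(h)=0$, $h$ is constant, and the identity holds on the nose.
\item The blow-ups in \cref{lem:crepant-blow-up} are already computed with actual divisors.
\item In \cref{prop:crepant-mor}, the extensions to $\bb{P}^1$ give $\varphi^*D_X=\psi^*D_Y$ as divisors, so $K_{Z/X}\sim K_{Z/Y}$. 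Write $K_{Z/X}-K_{Z/Y}=\mathrm{div}(h)$ and push forward to $X$. This gives $\mathrm{div}_X(h)=-\varphi_*K_{Z/Y}\le 0$, so $h$ is constant on the projective variety $X$ and $K_{Z/X}=K_{Z/Y}$.
\end{itemize}

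Running the proof of \cref{thm::main} with these remarks even shows a posteriori that your ratio of log volume forms is constant. That conclusion comes from non-degeneracy and the compatibility of $f$ and $g$, not from any normalization.
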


\begin{proof}
	Let $p\colon Z\to X$ and $q\colon Z\to Y$ be a common smooth projective resolution of $\varphi$. The log Calabi--Yau assumptions give
	$$
		p^*(K_X+D_X)=0=q^*(K_Y+D_Y).
	$$
	Moreover, $f\circ p=g\circ q$ as rational functions on $Z$. Hence $(X,D_X,f)$ and $(Y,D_Y,g)$ are crepant in the sense of \cref{defn:crepant}, and the conclusion follows from \cref{intro::main}.
\end{proof}

\begin{rmk} 
	\label{rmk:tame-versus-proper-compactifications}
	The log Calabi--Yau condition in \cref{cor:birational-log-CY} is a natural mirror symmetry assumption. It is precisely the log Calabi--Yau condition used in the toric Landau--Ginzburg literature \cite[Def.~2.4 and Rem.~2.5]{KatzarkovEtAlBogomolovTian17} \cite[Def.~3.6]{PrzyjalkowskiToricLG18}.
\end{rmk}

Mutations provide a natural source of birational log Calabi--Yau models. A \textit{mutation} of a Laurent polynomial $f$ on $T=\mathbb{G}_{m,\mathbb{C}}^n$ is a birational automorphism $\mu$ of $T$ which preserves the logarithmic volume form
$$
	\omega_T=\frac{\rr{d}x_1}{x_1}\wedge\cdots\wedge
	\frac{\rr{d}x_n}{x_n},
$$
such that $g=\mu^*f$ is again a Laurent polynomial; see \cite[Definition~1.1]{IltenMutations12}.

\begin{exe}\label{exe:ghv-mutations}
	Let $V\subset Y$ be a complete intersection in a toric Fano variety, cut out by sections of nef line bundles $L_1,\ldots,L_c$, and assume that $-K_Y-L_1-\cdots-L_c$ is ample. A nef partition for $(Y;L_1,\ldots,L_c)$, together with an amenable collection subordinate to it, gives a birational torus chart on the Givental--Hori--Vafa mirror and hence expresses its superpotential as a Laurent polynomial. The Newton polytope of this Laurent polynomial is reflexive \cite[Thms.~2.15 and~2.19]{DoranHarderToricDegenerations16}. Doran--Harder \cite[Thm.~2.24]{DoranHarderToricDegenerations16} prove that the Laurent polynomials obtained from two amenable collections subordinate to the same nef partition are related by a mutation. Prince \cite[Cor.~1.2]{PrinceBirationality21} removes the restriction that the nef partition be fixed: Laurent polynomial mirrors obtained from amenable collections subordinate to any nef partitions representing the same complete intersection are mutation equivalent. Thus all these Laurent polynomial presentations of the Givental--Hori--Vafa mirror lie in a single mutation class.
\end{exe}

\begin{cor}\label{cor:mutation-log-CY}
	Let $f$ and $g$ be mutation-equivalent Laurent polynomials on $T$. Suppose that they admit log Calabi--Yau compactifications $(X,D_X,F)$ and $(Y,D_Y,G)$ in the sense of \cref{defn:associated-compactification}. Then
	$$
		h^\alpha(X\backslash |D_X|,F,i)
		=
		h^\alpha(Y\backslash |D_Y|,G,i)
	$$
	for every $\alpha\in\bb Q$ and $i\in\bb Z$.
\end{cor}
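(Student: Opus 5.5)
The plan is to reduce \cref{cor:mutation-log-CY} to \cref{cor:birational-log-CY}. That corollary needs two log Calabi--Yau pairs with non-degenerate functions, together with a birational map between them that intertwines the functions as rational functions.

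First, unwind the hypotheses. By \cref{defn:associated-compactification}, a log Calabi--Yau compactification of $(T,f)$ is a pair with non-degenerate function $(X,D_X,F)$ such that:
\begin{enumerate}
\item $T$ is a Zariski open subset of $X$, with complement $D=D_v+D_h$ of simple normal crossing support;
\item $D_X=D_v$ is the pole divisor of $F$;
\item $F|_T=f$;
\item $K_X+D_X=0$.
\end{enumerate}
The same holds for $(Y,D_Y,G)$ and $g$. So both pairs satisfy the Calabi--Yau hypothesis of \cref{cor:birational-log-CY}.

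Second, produce the birational map. Mutation equivalence means there is a finite chain of mutations $f=f_0,f_1,\dots,f_m=g$. Each step comes with a birational automorphism $\mu_j$ of $T$ satisfying $f_j=\mu_j^*f_{j-1}$. Composing these gives a birational automorphism $\mu=\mu_1\circ\cdots\circ\mu_m$ of $T$ (composition order chosen so the pullbacks compose correctly) with $g=\mu^*f$, that is, $g=f\circ\mu$ as rational functions on $T$.

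Now define
$$\varphi\colon Y\dashrightarrow T\xrightarrow{\ \mu\ } T\dashrightarrow X,$$
using the open embeddings $T\subset Y$ and $T\subset X$. On the dense open set of $T$ where everything is defined, $F\circ\varphi=f\circ\mu=g=G$. Since $X$ and $Y$ are integral, this gives $G=F\circ\varphi$ as rational functions. Equivalently, $\varphi^{-1}\colon X\dashrightarrow Y$ satisfies $F=G\circ\varphi^{-1}$, which is the hypothesis of \cref{cor:birational-log-CY}.

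Applying \cref{cor:birational-log-CY} then gives equality of the irregular Hodge numbers of $(X\setminus|D_X|,F)$ and $(Y\setminus|D_Y|,G)$.

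The only point needing care is keeping the pullback and composition conventions for $\mu$ straight. Preservation of $\omega_T$ by $\mu$ is not used here; the log Calabi--Yau assumptions $K_X+D_X=0=K_Y+D_Y$ already supply crepancy on any common resolution. The substantive content is therefore entirely in \cref{cor:birational-log-CY}, and ultimately in \cref{thm::main}. No real obstacle is expected beyond checking that the rational-function identity $F\circ\varphi=G$ holds on a dense open set.
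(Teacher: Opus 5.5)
Your proposal is correct and is essentially the paper's proof. The paper likewise observes that the (composite) mutation induces a birational map $X\dashrightarrow Y$ identifying $F$ and $G$ as rational functions, and then invokes \cref{cor:birational-log-CY}; your bookkeeping of the chain of mutations and of the open embeddings $T\subset X$, $T\subset Y$ makes explicit what the paper leaves implicit.
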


\begin{proof}
	The mutations induce a birational map between $X$ and $Y$ which identifies $F$ and $G$ as rational functions. They satisfy the hypotheses of \cref{cor:birational-log-CY}, and hence have the same irregular Hodge numbers. 
\end{proof}

\begin{rmk}\label{rmk:ghv-log-CY-compactifications}
	The existence of log Calabi--Yau compactifications are known in many cases. For example, the smooth projective toric compactification associated to the refinement of a non-degenerate reflexive Laurent polynomial $f$ gives rise to such a compactification of $(T,f)$. Moreover, Przyjalkowski constructs them for Givental--Hori--Vafa toric Landau--Ginzburg models of Fano complete intersections in projective space \cite[Thm.~1]{PrzyjalkowskiCYCompactifications18}. Standard toric Landau--Ginzburg models of smooth Fano threefolds form another large class; see \cite[\S5.2]{PrzyjalkowskiToricLG18} and \cite[\S6]{DoranEtAlModularityLG23}.
\end{rmk}

\section*{Acknowledgement}
The author would like to thank Javier Fres\'an, Sukjoo Lee, Claude Sabbah, Christian Sevenheck, and Dingxin Zhang for valuable discussions.  

\bibliographystyle{alpha}
\bibliography{bibtex}

\end{document}